\def\MZ{{\mathbb{Z}}}
\newtheorem{theorem}{Theorem}[section]
\newtheorem{lemma}[theorem]{Lemma}
\newtheorem{proposition}[theorem]{Proposition}
\newtheorem{corollary}[theorem]{Corollary}
\theoremstyle{definition}
\newtheorem{example}[theorem]{Example}
\newtheorem{remark}[theorem]{Remark}
\newcommand{\gp}[1]{\langle #1 \rangle}
\let\oldmarginpar\marginpar
\renewcommand\marginpar[1]{\-\oldmarginpar[\raggedleft\footnotesize #1]%
{\raggedright\footnotesize #1}}
\DeclareMathOperator{\Geodesic}{Geodesic}
\DeclareMathOperator{\Bouquet}{Bouquet}
\DeclareMathOperator{\Complete}{Complete}
\DeclareMathOperator{\Paths}{Paths}
\def\ovalpha{{\overline{\alpha}}}
\def\ovx{{\overline{x}}}
\newcommand{\coords}[1]{\ensuremath{\mathrm{Coord}({#1})}} 
\newcommand{\pivot}[1]{\ensuremath{\pi_{{#1}}}} 
\DeclareMathOperator{\Mal}{{Mal}}
\def\P{{\mathbf{P}}}
\newcommand{\am}{\noindent\color{blue} AM: }{}
{}
\title{Non-commutative lattice problems}
\author[]{Alexei Myasnikov, Andrey Nikolaev, and Alexander Ushakov}
\address{Stevens Institute of Technology, Hoboken, NJ, 07030 USA}
\email{amiasnik,anikolae,aushakov@stevens.edu}
\thanks{The work of the first and third author was partially supported by NSF grant DMS-1318716, the first author was also supported  by Russian Research Fund, project 14-11-00085.}
\begin{document}

\begin{abstract}
We consider several subgroup-related algorithmic questions in groups, modeled after the classic computational lattice problems, and study their computational complexity. We find polynomial time solutions to problems like finding a subgroup element closest to a given group element, or finding a shortest non-trivial element of a subgroup in the case of nilpotent groups, and a large class of surface groups and Coxeter groups. We also provide polynomial time algorithm to compute geodesics in given generators of a subgroup of a free group.

\smallskip
\noindent
{\bf Keywords.} Lattice problems, nilpotent groups, free groups, surface groups, Coxeter groups.

\smallskip
\noindent
{\bf 2010 Mathematics Subject Classification.} 03D15, 20F65, 20F10.
\end{abstract}

\maketitle

\section{Introduction}\label{sec:intro}

\subsection{Motivation}

In this paper, following \cite{MNU1,MNU2,Frenkel-Nikolaev-Ushakov:2014}  
we continue our research on  non-commutative discrete (combinatorial) optimization. Namely,  we define lattice problems  for an arbitrary algebraic structure and then study  these problems  together with their variations for an arbitrary group $G$.   The  purpose of this research is threefold.
First, we approach  lattice problems in a very different context by viewing them in the framework of classical algebra, thus facilitating  a deeper understanding of the nature of these  problems in general. Second, we try to unify and tackle  several interesting algorithmic problems in group theory that are related to lattice problems. Third, we aim to establish a unified outlook on several group-theoretic problems within the framework of lattice problems. We refer to \cite{MNU1}  for the initial motivation, the set-up of the problems, and initial facts on non-commutative discrete optimization. 

\subsection{Non-commutative lattice problems}

Let $G$ be a fixed finitely generated group with a fixed finite set of generators $A$. We  fix the word metric $d_A$ on $G$ relative to the generating set $A$ and for $g \in G$ by $|g|$ we  denote the length $d_A(g,1)$ of $g$ in the word metric $d_A$. A geodesic representing $g$ is a shortest word in $A\cup A^{-1}$ representing $g$ in $G$.

We always assume below, if not said otherwise, that elements of $G$ are given by words in the generators $A \cup A^{-1}$, and finitely generated subgroups of  $G$  are given by their finite generating sets, where elements are represented by words in $A\cup A^{-1}$.

All algorithmic problems we consider in this paper are related to word metrics,  geodesics, and distances between various subsets in $G$. The most fundamental one among them is the \emph{geodesic problem} in $G$ which requires for a given $g \in G$ to find a geodesic representing this $g$ in $G$. This problem is well-studied in geometric group theory and  there are many known results, but we mention only the following ones which clarify the nature of geodesics in groups that we consider  here: geodesics in hyperbolic groups~\cite{Epstein}, Coxeter groups~\cite{Bjorner, Brink-Howlett, Casselman, Diekert-K-L}, Artin groups~\cite{Holt-Rees}, and various solvable groups~\cite{Miasnikov_Romankov_Ushakov_Vershik:2010}.

\medskip\noindent
{\bf  Element-subgroup distance problem:} {\it  Given a  finitely generated subgroup $H$ of $G$ and an element $ g\in G$ find an element $h \in H$ which is closest to $g$ in the word metric on $G$. }

\medskip\noindent 
One can immediately recognize that this is precisely the non-commutative version of the classical \emph{closest element problem} in lattices.

\medskip\noindent 
{\bf Subgroup distance problem:}  {\it Given two finitely generated subgroups $H$ and $K$ of $G$  find the distance between them, i.e., find two elements  $h \in H$ and $k \in K$, not both trivial, with a shortest distance apart. }

\medskip\noindent 
Notice, that in the problem above it is very natural to  consider not only subgroups, but cosets, or double cosets, etc. The most general and powerful formulation here would be about distances between {\em rational subsets} of $G$.

\medskip\noindent 
{\bf Shortest  element problem in a subgroup:}  {\it Given a  finitely generated subgroup $H$ of $G$  find a shortest nontrivial element in $H$ in the word metric on $G$.}

\medskip\noindent 
Clearly, this is a general form  of the classical \emph{shortest vector problem}  in lattices.

\medskip\noindent 
{\bf Subgroup geodesic  problem:}
{\it Given a  subgroup $H$ of $G$ generated by $h_1, \ldots, h_n \in G$ and an element $ g\in G$ which belongs to $H$  find the geodesic length of $g$ with respect to the word metric on $H$ relative to the generating set $h_1, \ldots, h_n$.}

\medskip\noindent
Obviously, this is a very vast generalization of the initial geodesic problem in $G$ and in general it is much harder then the initial one. 



\subsection{Results and the structure of the paper }

In Section \ref{se:prelim} we prove some preliminary results on complexity of algorithmic problems in nilpotent groups and discuss some results due to Schupp on Coxeter and surface groups. 

In Section \ref{se:closest} we show that the closest element problem is decidable in polynomial time in  free groups, virtually nilpotent groups,  Coxeter and surface groups satisfying Reduction Hypothesis (all these groups are finitely generated). And in Section \ref{se:shortest} we show that for the same groups the shortest element problem is decidable in polynomial time as well.

In Section \ref{se:rational} we prove that for rational subsets of a free group, given by deterministic automata, the Rational Subsets Distance Problem  is decidable in polynomial time. In particular, this allows one to compute distances between subgroups or cosets in free groups.

In Section \ref{se:geodesic-subgroup} we solve the subgroup geodesic problem in free groups in polynomial time. Notice, that if the word problem in $G$ is decidable then the subgroup geodesic problem is also decidable (by  brute force verification), furthermore, if the word problem in $G$  is decidable in polynomial time and the subgroup is embedded isometrically then the brute force algorithm has an exponential running time. Thus the subgroup geodesic problem in surface groups, or limit groups, as well as in quasi-convex subgroups of hyperbolic groups, is decidable in exponential time. However, even in free groups solving this problem in polynomial time is a highly non-trivial  business.

\section{Preliminaries} \label{se:prelim}
\subsection{Nilpotent groups}
In this section we prove some basic facts regarding algorithmic complexity of certain problems in nilpotent groups. These facts appear to be well-known, but we were unable to find original explicit estimates of complexity. For the sake of completeness we provide them here.

Recall that in a group $G$ so-called {\em $N$-fold commutators} on a set $A \subseteq G$ are defined as follows. $1$-fold commutator on $A$ is any element of $A$. Inductively, $N$-fold commutator is any element $[u,v]$, where $u$ is an $i$-fold commutator, and $v$ a $j$-fold  commutator on $A$, where $i+j=N$.

Further, recall that a free class $c$ rank $r$ nilpotent group $N_{r,c}$ with basis $X = \{x_1,\ldots, x_r\}$ possesses a so-called Malcev basis, that is a tuple  of elements $Y = (y_1,\ldots,y_m)$ ($m$ is bounded above by a polynomial in $r$ of degree that linearly depends on $c$) such that \begin{enumerate}
\item every $y_i$ is a $k_i$-fold commutator on $X$, with $k_i\ge k_j$ whenever $i\ge j$; and
\item and every element $g\in N_{r,c}$ can be uniquely represented as
$$
g=y_1^{\alpha_1}\cdots y_m^{\alpha_m},\quad \alpha_i\in\mathbb Z.
$$
\end{enumerate}
The latter expression is called the {\em Malcev normal form} of $g$. The tuple
$[\alpha_1,\ldots,\alpha_m]$ is called the {\em Malcev exponent} of $g$. We write $[\alpha_1,\ldots,\alpha_m]=\Mal_Y(g)$ (or just $\Mal(g)$ when $Y$ is fixed) and  $g=Y^{[\alpha_1,\ldots,\alpha_m]}$. In what follows we assume for definiteness that $y_i=x_i$ for $i=1,\ldots, r$.

The following lemma is well-known (see~\cite[Theorem 6.5]{Hall:57}).
\begin{lemma}\label{le:malcev} Let $r,c$ be positive integers, and let $y_1,\ldots, y_m$ be a Malcev basis for the free nilpotent group $N_{r,c}$ of class $c$ and rank $r$.
\begin{itemize}
\item[(a)] Then there are polynomials $p_1,\ldots, p_m$ in variables $\alpha_1,\ldots,\alpha_m,\beta_1,\ldots,\beta_m$ such that
$$
(y_1^{\alpha_1}\cdots y_m^{\alpha_m})\cdot
(y_1^{\beta_1}\cdots y_m^{\beta_m})=
y_1^{p_1}\cdots y_m^{p_m},\quad \alpha_i,\beta_i\in\mathbb Z.
$$
\item[(b)] Further, if $0=\alpha_1=\ldots=\alpha_k$ or $0=\beta_1=\ldots=\beta_k$, then the corresponding values of $p_i$, $i=1,2\ldots,k+1$, are $\alpha_i+\beta_i$.
\item[(c)] There are polynomials $q_1,\ldots, q_m$ in variables $n,\alpha_1,\ldots, \alpha_m$ such that
$$
(y_1^{\alpha_1}\cdots y_m^{\alpha_m})^n=y_1^{q_1}\cdots y_m^{q_m}, \quad n,\alpha_i\in\mathbb Z.
$$
\end{itemize}
\end{lemma}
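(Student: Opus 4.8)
The plan is to derive all three parts from P.\ Hall's collection process applied to the lower central series $N = N_{r,c} = \gamma_1 \supseteq \gamma_2 \supseteq \cdots \supseteq \gamma_{c+1} = 1$, in which each quotient $\gamma_w/\gamma_{w+1}$ is free abelian with basis the images of the weight-$w$ basic commutators among $y_1,\ldots,y_m$. The only group-theoretic input is the standard commutator calculus (for instance $y_j y_i = y_i y_j [y_j,y_i]$, together with $[uv,w]=[u,w]^{v}[v,w]$ and $[u,vw]=[u,w][u,v]^{w}$), which lets us rewrite any word in the $y_i$ into Malcev normal form by repeatedly moving lower-indexed generators to the left. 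The crucial bookkeeping observation is that each transposition spawns commutators of strictly larger weight; since the Malcev basis is ordered by weight, these occupy strictly later slots and vanish altogether once their weight exceeds $c$, so the collection terminates. The main difficulty throughout is not any single identity but this polynomiality bookkeeping: arguing that the collected exponents are genuinely polynomial, with degrees controlled by the basic-commutator weights.

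For part (a) I would induct on the class $c$. When $c=1$ the group is free abelian and $p_i=\alpha_i+\beta_i$. For the inductive step, pass to the central quotient $N/Z$ with $Z=\gamma_c$, which is spanned by the weight-$c$ basis elements $y_{s+1},\ldots,y_m$ and satisfies $N/Z\cong N_{r,c-1}$; the images of $y_1,\ldots,y_s$ form its Malcev basis. By induction the first $s$ product-exponents are polynomials $p_i=\overline{p}_i(\alpha,\beta)$ in $\alpha_1,\ldots,\alpha_s,\beta_1,\ldots,\beta_s$, since the central factor does not affect slots $1,\ldots,s$. The remaining central exponents $p_{s+1},\ldots,p_m$ record the central commutators accumulated while collecting $y_1^{\alpha_1}\cdots y_m^{\alpha_m}\cdot y_1^{\beta_1}\cdots y_m^{\beta_m}$; iterating the identities above shows that the exponent of each is a sum of products of binomial coefficients $\binom{\alpha_i}{a}\binom{\beta_j}{b}$. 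As each $\binom{\alpha}{a}$ is a polynomial and the class bound $c$ caps the number of factors, every $p_k$ is a polynomial, as claimed.

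Part (b) asserts that the leading Malcev coordinates behave additively. Assuming $\alpha_1=\cdots=\alpha_k=0$, the factor $g=y_{k+1}^{\alpha_{k+1}}\cdots y_m^{\alpha_m}$ lies in $\gamma_w$ with $w=\mathrm{wt}(y_{k+1})$, so $[g,h]\in\gamma_{w+1}$ and $gh=h\,g\,[g,h]$. Merging the tail $y_{k+1}^{\alpha_{k+1}}\cdots y_m^{\alpha_m}$ into $h$ only requires moving generators of weight $\ge w$ leftward, which spawns commutators of weight $>w$; together with $[g,h]\in\gamma_{w+1}$, every correction lands in a slot of weight exceeding $w\ge\mathrm{wt}(y_i)$ for $i\le k+1$, hence in indices beyond $k+1$. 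Thus slots $1,\ldots,k$ retain their values $\beta_i=\alpha_i+\beta_i$ and slot $k+1$ becomes $\alpha_{k+1}+\beta_{k+1}$; the case $\beta_1=\cdots=\beta_k=0$ is symmetric.

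For part (c), let $q_i(n)$ be the exponent of $y_i$ in $g^{n}$, so $q_i(0)=0$ and, writing $g^{n}=g^{n-1}g$ and applying (a), the $q_i(n)$ satisfy a recurrence in the $q_j(n-1)$. I would induct on the Malcev index $i$, proving that $q_i(n)$ is a polynomial in $n$ of degree at most $w_i=\mathrm{wt}(y_i)$ with coefficients polynomial in the $\alpha_j$. The base case is $q_1(n)=n\alpha_1$. In general, collecting $g^{n-1}\cdot g$ produces only commutators containing at least one generator from the right-hand block $g$ (a factor of $n$-degree zero), so the weight carrying $n$-dependence is at most $w_i-1$; hence the forward difference $q_i(n)-q_i(n-1)$ is a polynomial of degree $\le w_i-1$ in $n$ once the $q_j(n-1)$ with $j<i$ are known. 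Summing this difference (a discrete antiderivative, which is $\mathbb{Q}$-linear and preserves joint polynomiality in $n$ and the $\alpha_j$) yields $q_i(n)$ of degree $\le w_i$. A slicker alternative that avoids explicit collection is to pass to the Mal'cev completion $N\otimes\mathbb{Q}$ and use the Baker--Campbell--Hausdorff formula, where multiplication is manifestly polynomial (the series terminates by nilpotency) and $g^{n}=\exp(n\log g)$ is manifestly polynomial in $n$; transferring back through the triangular polynomial change between logarithmic and Malcev coordinates recovers (a) and (c), and I would fall back on this route if the direct degree estimates became unwieldy.
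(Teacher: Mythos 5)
The paper offers no proof of this lemma at all: it is quoted as well known, with a citation to P.~Hall's Edmonton notes (Theorem 6.5 of \cite{Hall:57}), so there is no in-paper argument to compare against. Your proposal reconstructs the classical collection-process proof, and as an outline it is correct and is essentially the argument behind Hall's theorem: the induction on nilpotency class through the central quotient for (a), the observation for (b) that a factor supported on $y_{k+1},\ldots,y_m$ lies in a term of the (weight-ordered) central series so that all collection corrections land in strictly later slots, and the telescoping recurrence $q_i(n)=q_i(n-1)+\alpha_i+r_i(q_1(n-1),\ldots,q_{i-1}(n-1),\alpha_1,\ldots,\alpha_{i-1})$ with a discrete antiderivative for (c) are all the standard route; the Mal'cev completion/BCH fallback is likewise standard. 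Two caveats. First, the genuine content of part (a) --- that the collected exponents of the basic commutators are sums of products of binomial coefficients in the $\alpha_i,\beta_j$, hence polynomial --- is asserted rather than derived; this is precisely the bookkeeping that occupies Hall's proof, so your sketch leans on the conclusion at its hardest point (the same remark applies to the isobaric degree bound $\deg q_i\le w_i$ in (c), though the lemma as stated does not require any degree bound). Second, in (c) the recurrence argument as written produces a polynomial agreeing with the coordinates of $g^n$ for $n\ge 0$ only; one should add the standard remark that the recurrence $g^n=g^{n-1}g$ holds for all $n\in\mathbb Z$ and a first-order recurrence with fixed value at $n=0$ has a unique solution, so the polynomial (which satisfies the recurrence identically) gives the coordinates for negative $n$ as well. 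Neither caveat is a structural flaw; both are points where the writeup defers to, rather than reproves, the cited classical facts.
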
 

\begin{lemma}\label{le:compute_malcev}Let $r,c$ be positive integers, and let $y_1,\ldots, y_m$ be a Malcev basis for the free nilpotent group $N_{r,c}$ of class $c$  and rank $r$. There is an algorithm that given a word $g$ in free generators of $N_{r,c}$ computes the Malcev normal form of $g$ in a time polynomial in the word length $|g|$ of $g$.
\end{lemma}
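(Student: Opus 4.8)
The plan is to compute the normal form incrementally, processing $g=a_1a_2\cdots a_L$ (with $L=|g|$ and each $a_t\in X\cup X^{-1}$) from left to right while maintaining the Malcev exponent of the prefix $a_1\cdots a_t$. Writing the current exponent as $[\alpha_1,\ldots,\alpha_m]$, the next letter $a_{t+1}=x_j^{\pm 1}=y_j^{\pm 1}$ has Malcev exponent $[\beta_1,\ldots,\beta_m]$ with $\beta_i=0$ for $i\ne j$ and $\beta_j=\pm 1$. By Lemma~\ref{le:malcev}(a), the exponent of the extended prefix is obtained by evaluating the collection polynomials $p_1,\ldots,p_m$ at $(\alpha_1,\ldots,\alpha_m,\beta_1,\ldots,\beta_m)$. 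Since $r$ and $c$ are fixed, the number $m$ and the polynomials $p_i$ (their degrees and coefficients) are fixed constants that can be precomputed, so each of the $L$ steps performs $O(1)$ integer arithmetic operations on the running exponents.

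The only way this can fail to run in polynomial time is through growth of the intermediate exponents: if $|\alpha_i|$ grew exponentially in $t$, the bit-lengths, and hence the cost of a single polynomial evaluation, would become super-polynomial in $L$. So the heart of the argument is a bound of the form $|\alpha_i|\le C\,t^{k_i}$, where $k_i$ is the weight of $y_i$ (i.e.\ $y_i$ is a $k_i$-fold commutator) and $C=C(r,c)$ is a constant. Granting such a bound, every $\alpha_i$ has bit-length $O(\log L)$, each arithmetic operation on such integers costs $\mathrm{poly}(\log L)$, and the total running time is $L\cdot O(1)\cdot\mathrm{poly}(\log L)$, which is polynomial in $L$; the output tuple $[\alpha_1,\ldots,\alpha_m]$ likewise has size $O(m\log L)$.

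I would establish the growth bound by induction on the prefix length $t$, exploiting the weight grading of $N_{r,c}$. Assigning the variable $\alpha_\ell$ (and $\beta_\ell$) the degree $k_\ell$, the collection polynomial $p_i$ is weight-homogeneous of weight $k_i$; this is exactly the graded structure underlying Lemma~\ref{le:malcev} and reflects the dilation automorphisms of the free nilpotent group. For the low coordinates (indices $i\le j$ when multiplying by $y_j^{\pm 1}$), part~(b) of Lemma~\ref{le:malcev} gives $p_i=\alpha_i+\beta_i$, so these change by at most $1$ and the weight-$1$ coordinates never exceed $t$. For a coordinate of weight $w=k_i$ the correction $p_i-\alpha_i-\beta_i$ consists of monomials each containing the single surviving factor $\beta_j$ (of weight $k_j\ge 1$) times a product of $\alpha$-factors of total weight $\le w-1$; under the inductive hypothesis $|\alpha_\ell|\le C\,t^{k_\ell}$ such a product is $O(t^{\,w-1})$, so the per-step increment of $\alpha_i$ is $O(t^{\,w-1})$. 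Summing over the $t$ steps gives $|\alpha_i|=O(t^{w})$, which closes the induction once $C$ is chosen large enough to absorb the fixed number of monomials and their coefficients. The main obstacle is precisely making the weight-homogeneity and this per-step increment bound rigorous; once the grading is in place the induction is routine.

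An alternative and perhaps cleaner route to the same growth bound uses a faithful representation of $N_{r,c}$ by upper unitriangular integer matrices of fixed size $n=n(r,c)$: multiplication by a generator is then multiplication by a fixed matrix with entries in $\{0,\pm 1\}$, and the entry on the $d$-th superdiagonal of a product of $t$ such matrices is seen by induction to be $O(t^{d})$ in absolute value, so all entries remain polynomially bounded and $O(\log L)$-bit throughout; the Malcev coordinates are recovered from the matrix entries by back-substitution, itself a polynomial-time step. Either way the algorithm is the letter-by-letter collection described above, and the substantive content is the polynomial bound on the size of the integers involved.
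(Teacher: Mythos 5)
Your proof is correct in substance but takes a genuinely different route from the paper's. The paper argues by downward induction on the index of the Malcev basis: it writes $w$ as an alternation of powers of $y_j$ with words in the higher generators $y_{j+1},\ldots,y_m$, pushes every occurrence of $y_j$ to the left using Lemma~\ref{le:malcev}(a),(b), notes that each of the at most $|w|$ rewriting steps applies Lemma~\ref{le:malcev}(a) once to data of size polynomial in $|w|$ (so there is no compounding of polynomial degrees within one level), and then recurses on the remaining word in the higher generators; the degree of the final time bound grows with the recursion depth but is controlled because $m=m(r,c)$ is a constant. You instead make a single left-to-right pass, multiplying the running Malcev exponent by one letter at a time, and you correctly identify that the whole difficulty is an a priori bound on the magnitude of the intermediate coordinates. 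Your bound $|\alpha_i|=O(t^{k_i})$ is true, and your grading argument is essentially right; the one caveat is that for a Malcev basis adapted only to the lower central series filtration (rather than to the grading) the collection polynomials need not be exactly weight-homogeneous --- one only gets that every monomial of $p_i-\alpha_i-\beta_i$ has weight at most $k_i$ --- but that inequality is all your induction uses, since each such monomial still carries a $\beta$-factor of weight at least one. The trade-off: the paper's argument needs nothing beyond Lemma~\ref{le:malcev} as stated and never quantifies exponent growth directly, while yours yields sharper explicit bounds ($O(\log|g|)$ bits per coordinate and an essentially linear-time pass) at the price of an extra structural fact about the multiplication polynomials (or, in your alternative, the embedding of $N_{r,c}$ into unitriangular integer matrices plus back-substitution) that must be established separately; that fact is standard and is also what underlies the paper's remark that the exponents of $\Mal(g)$ are polynomially bounded in $|g|$.
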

\begin{proof}
We prove the following statement by induction in $k$: if $w$ is a group word in variables $y_k,y_{k+1},\ldots, y_m$, $1\le k\le m$, then its Malcev normal form $y_k^{\alpha_k}\cdots y_m^{\alpha_m}$ can be computed as a group word in a time polynomial in the word length of $w$. 

Base of induction $k=m$ is obvious. Suppose the statement holds for $k=j+1$, and prove it for $k=j$. Given a word $w$ in variables $y_j,\ldots, y_m$, represent it as
$$
w=w_1 y_j^{\beta_1}w_2y_j^{\beta_2}\cdots w_\ell y_j^{\beta_\ell}w_{\ell+1},
$$
where $w_1,\ldots, w_{\ell+1}$ are (possibly trivial) group words in variables $y_{j+1},\ldots, y_m$ and $\beta_1,\ldots,\beta_\ell$ are nonzero integers. Note that both $\ell$ and $|w_i|$ are bounded by $|w|$, so after a polynomial time computation we can assume that all $w_i$ are given by their Malcev normal forms. Then we ``push'' all occurrences of $y_j$ to the left, starting with the rightmost occurrence, as follows.
\begin{itemize}
\item At the first step, we apply Lemma~\ref{le:malcev}(a) to compute Malcev normal form of $w_\ell\cdot y_j^{\beta_\ell}$. Note that by Lemma~\ref{le:malcev}(b), this Malcev normal form is a word $y_j^{\beta_\ell}w'_\ell$, where $w'_\ell$ is a group word in variables $y_{j+1},\ldots, y_m$. This allows us to represent
$w$ as
\[
w=w_1 y_j^{\beta_1}\cdots w_{\ell-1}y_j^{\beta_{\ell-1}+\beta_\ell}w'_\ell w_{\ell+1}.
\]
Note that the length of $w'_\ell$ is polynomial in $|w|$ by Lemma~\ref{le:malcev}(a).
\item Subsequently, having obtained
\[
w=w_1 y_j^{\beta_1}\cdots w_{\ell-i}y_j^{\beta_{\ell-i}+\ldots+\beta_\ell}w'_{\ell-i+1}\cdots w'_\ell w_{\ell+1},\quad 1\le i\le \ell-1,
\]
we apply Lemma~\ref{le:malcev}(a,b) to compute Malcev normal form of $w_{\ell-i}\cdot y_j^{\beta_{\ell-i}+\ldots+\beta_\ell}$, obtaining
\[
w=w_1 y_j^{\beta_1}\cdots w_{\ell-i-1}y_j^{\beta_{\ell-i-1}+\ldots+\beta_\ell}w'_{\ell-i}w'_{\ell-i+1}\cdots w'_\ell w_{\ell+1},
\]
Note that the length of $w'_{\ell-i}$ is polynomial in $|w|$ by Lemma~\ref{le:malcev}(a).
\item Repeating the above step $\ell-1\le |w|$ times, we arrive at
\[
w=y_j^{\beta_1+\beta_{2}+\ldots+\beta_\ell}w'_1 w'_2\cdots w'_\ell w_{\ell+1}.
\]
Now it is only left to observe that the word $w'_1 w'_2\cdots w'_\ell w_{\ell+1}$ in variables $y_{j+1},\ldots, y_m$ is of polynomial length in $|w|$, so by the induction assumption its Malcev normal form can be computed in polynomial time.
\end{itemize}
Note that the degree of polynomial that bounds time complexity of the above procedure (possibly) grows as $j$ decreases, but since $m$ only depends on $r,c$, the degree of polynomial that bounds time complexity of the resulting algorithm ultimately  depends only on $r,c$.
\end{proof}
\begin{remark}
\ 
\begin{enumerate}
\item The above procedure can be significantly optimized by taking into account structure of the Malcev basis $Y$ (see~\cite{MMNV}).
\item It immediately follows that the exponents in $\Mal(g)$ are bounded by a polynomial in $|g|$ (that depends on $r,c$).
\item Lemmas~\ref{le:malcev}, \ref{le:compute_malcev}, \ref{le:magic} with minimal changes are also true for appropriate bases of arbitrary finitely generated  nilpotent groups by essentially the same argument (see~\cite{MMNV}).
\end{enumerate}
\end{remark}

\begin{lemma}\label{le:magic} Let $r,c$ be positive integers, and let $y_1,\ldots, y_m$ be a Malcev basis for the free nilpotent group $N_{r,c}$ of class $c$  and rank $r$. There is an algorithm that, given elements $g_1,\ldots,g_n\in N_{r,c}$ and exponents $k_1,\ldots,k_n\in\mathbb Z$, computes the Malcev normal form of the element $g_1^{k_1}\cdots g_n^{k_n}$ in a time polynomial in $N=n+\sum |g_j|+\sum |k_j|$.
\begin{proof}Note that the word length of an element $g_j^{k_j}$ is bounded by $|g_j||k_j|\le N^2$. Further, since $n\le N$, the word length of $g_1^{k_1}\cdots g_n^{k_n}$ is bounded by $N^2\cdot N=N^3$. By Lemma~\ref{le:compute_malcev}, the Malcev normal form of such element can be computed in a time polynomial in $N^3$.
\end{proof}

\end{lemma}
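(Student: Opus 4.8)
The plan is to reduce the statement directly to Lemma~\ref{le:compute_malcev} by realizing $g_1^{k_1}\cdots g_n^{k_n}$ as a single word in the free generators whose length is polynomial in $N$. First I would write each $g_j$ as the given word in $A\cup A^{-1}$ of length $|g_j|$, so that $g_j^{k_j}$ is the word obtained by concatenating $|k_j|$ copies of $g_j$ (or of the formal inverse word if $k_j<0$); this is a word of length $|g_j|\cdot|k_j|\le N^2$, since $|g_j|\le N$ and $|k_j|\le N$. Concatenating these $n\le N$ words yields a single word $w$ in $A\cup A^{-1}$ representing $g_1^{k_1}\cdots g_n^{k_n}$, of length at most $N^3$ (in fact at most $N^2$, since $\sum_j |g_j||k_j|\le(\max_j|k_j|)\sum_j|g_j|\le N^2$). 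Crucially, $w$ can be explicitly written down in time polynomial in $N$.

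Next I would simply invoke Lemma~\ref{le:compute_malcev} on $w$. That lemma produces the Malcev normal form of a word in the free generators of $N_{r,c}$ in time polynomial in the word length, with the degree of the polynomial depending only on $r$ and $c$. Applying it to $w$, whose length is bounded by $N^3$, computes the Malcev normal form of $g_1^{k_1}\cdots g_n^{k_n}$ in time polynomial in $N^3$, hence polynomial in $N$. This is essentially the whole argument: there is no delicate combinatorics, only a bookkeeping of the length bounds.

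The one point to be careful about, and the place where a reader might expect difficulty, is the encoding of the exponents. The reduction works precisely because $N$ measures each $k_j$ by its absolute value $|k_j|$, so the unary expansion of $g_j^{k_j}$ has length at most $N^2$ and stays within our polynomial budget. Were the $k_j$ instead presented in binary, this expansion would in general be exponentially long in the input size and the concatenation approach would break down; in that regime one would instead compute each $g_j^{k_j}$ directly in Malcev form using the power polynomials $q_1,\ldots,q_m$ of Lemma~\ref{le:malcev}(c) together with repeated squaring, and then multiply the resulting normal forms pairwise via the collection polynomials of Lemma~\ref{le:malcev}(a). For the statement as given, however, the straightforward expand-and-collect reduction suffices, so the only real work is verifying that every intermediate word length remains polynomial in $N$.
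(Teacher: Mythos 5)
Your proposal is correct and follows exactly the paper's own argument: expand each $g_j^{k_j}$ into a word of length at most $|g_j||k_j|\le N^2$, concatenate the $n\le N$ pieces into a word of length polynomial in $N$, and invoke Lemma~\ref{le:compute_malcev}. Your added remarks (the sharper $N^2$ bound and the caveat about binary-encoded exponents) are sound but do not change the route.
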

The ``noncommutative Gauss'' algorithm for solving membership problem in nilpotent (or, more generally, polycyclic) groups is well-known~\cite{LNS}. In the following lemma we investigate the complexity of this algorithm.
\begin{lemma}\label{le:nilpotent_gauss}
Subgroup membership problem in a finitely generated nilpotent group is decidable in polynomial time.
\end{lemma}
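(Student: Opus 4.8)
The plan is to analyze the complexity of the classical collection (``noncommutative Gauss'') procedure carried out entirely in Malcev coordinates; the substance of the proof is a blow-up control on the integer exponents rather than any new algebra. Let $H=\gp{h_1,\ldots,h_n}$ and $g$ be given. First I would apply Lemma~\ref{le:compute_malcev} to replace $g,h_1,\ldots,h_n$ by their Malcev normal forms, so that, by the remark following Lemma~\ref{le:compute_malcev}, every exponent produced has magnitude bounded by a fixed polynomial in the input length. The structural backbone is the central filtration $G_i=\gp{y_i,y_{i+1},\ldots,y_m}$, $1\le i\le m+1$, a refinement of the lower central series in which each $G_i$ is a subgroup, $G_{i+1}\trianglelefteq G_i$, and $G_i/G_{i+1}$ is infinite cyclic generated by $y_iG_{i+1}$. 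For a nontrivial $u$ I call the least index $i$ with nonzero $i$-th Malcev coordinate the \emph{pivot} of $u$ and that coordinate the \emph{leading exponent} of $u$, so that $u$ lies in $G_i\setminus G_{i+1}$.

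Second, I would compute a \emph{standard generating set} $u_1,\ldots,u_s$ of $H$ with strictly increasing pivots, by multiplicative Gaussian/Hermite elimination. Processing the generators through the filtration, at each level $i$ the leading exponents of elements of $H\cap G_i$ with pivot exactly $i$ form a subgroup $d_i\mathbb Z\le\mathbb Z$; using the extended Euclidean algorithm I fuse all current elements sharing a pivot into one element realizing $d_i$, and I clear the leading exponent of every other element by multiplying it (Lemma~\ref{le:magic}) by a suitable power of that element, pushing it into $G_{i+1}$. Since the filtration has length $m=m(r,c)$, this yields at most $m$ standard generators. Membership of $g$ is then decided by \emph{sifting}: repeatedly inspect the leading coordinate $\alpha_i$ of $g$; if $i$ is the pivot of some $u_j$ and $d_i\mid\alpha_i$, multiply $g$ by $u_j^{-\alpha_i/d_i}$ to descend into $G_{i+1}$, otherwise halt with $g\notin H$. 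Correctness is immediate from uniqueness of Malcev normal forms: $g\in H$ exactly when this process drives $g$ to the identity.

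Each of the boundedly many elimination and sifting steps is a product $g_1^{k_1}\cdots g_t^{k_t}$ with $t$ and the exponents $k_j$ controlled, hence computable in polynomial time by Lemma~\ref{le:magic}, and the number of such steps is polynomial in $n$ and $m$. \textbf{The main obstacle}, and the only delicate point, is that Lemma~\ref{le:magic} is polynomial in the \emph{magnitude} $\sum|k_j|$ of the exponents, and that raising a generator to a power $e$ alters its lower coordinates through the polynomials $q_j$ of Lemma~\ref{le:malcev}(c); so a naive elimination could compound degrees across the $n$ insertions and blow the exponents up (double-)exponentially, exactly as fraction-free integer Gaussian elimination does. To prevent this I would keep the standard generators permanently in a reduced, Hermite-like form — each off-pivot coordinate reduced modulo the pivot $d_j$ governing its level — so that every exponent used in a collection step has magnitude bounded by the relevant $d_j$, and I would observe that the pivots $d_i$ can only shrink, being gcd's of earlier leading exponents, and hence never exceed the polynomial magnitude bound inherited from the input. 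Combined with the fact that the filtration length $m$ is an absolute constant (so chained compositions have constant depth), this guarantees that all intermediate Malcev exponents stay of polynomially bounded magnitude, and the total running time is polynomial in $n+|g|+\sum|h_i|$. Establishing this uniform size bound, rather than the elimination itself, is where the real work of the proof lies.
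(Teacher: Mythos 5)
There is a genuine gap, and it is not where you locate it. Your elimination produces elements $u_1,\ldots,u_s$ with strictly increasing pivots by fusing leading exponents and pushing the other generators into $G_{i+1}$, and you then assert that sifting against these is correct ``by uniqueness of Malcev normal forms.'' But correctness of sifting requires the saturation property $H\cap G_i=\gp{u_j\mid \pi_j\ge i}$ for every $i$, and row operations alone do not deliver it: the elements you push into $G_{i+1}$ generate only a (generally proper) subgroup of $H\cap G_{i+1}$, because $H\cap G_{i+1}$ is the kernel of the homomorphism $H\cap G_i\to\Z$ given by the $i$-th coordinate, hence is the \emph{normal} closure in $H\cap G_i$ of the reduced generators, not merely the subgroup they generate. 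Concretely, in the free nilpotent group $N_{2,2}$ with Malcev basis $y_1=x_1$, $y_2=x_2$, $y_3=[x_1,x_2]$, take $H=\gp{x_1,x_2}=N_{2,2}$ and $g=[x_1,x_2]$. Your elimination yields $u_1=x_1$, $u_2=x_2$ and no element with pivot $3$; sifting $g$, whose Malcev exponent is $[0,0,1]$, halts at level $3$ and reports $g\notin H$, which is false. For the same reason your claim that the gcd of the leading exponents of the \emph{current} elements at level $i$ realizes $d_i$ (defined via $H\cap G_i$) fails for $i\ge 2$. The missing step is precisely the paper's condition (iii)$'$: one must close the partial generating set under conjugation by the pivot elements, appending $\coords{h_j^{h_k^{\ell}}}$ for $1\le|\ell|\le m-\pi_k$, using nilpotency of class $c$ to guarantee that finitely many iterated conjugates suffice.

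By contrast, the issue you single out as ``where the real work lies'' --- exponent blow-up across elimination steps --- is handled in the paper without any Hermite-style reduction of off-pivot coordinates: each pass is a bounded composition of the polynomial maps of Lemma~\ref{le:malcev}(a,c) applied via Lemma~\ref{le:magic}, and since the number of passes is at most $m=m(r,c)$, a constant, the composition of constantly many polynomial bounds is polynomial. Your Hermite normalization is a sensible optimization but is neither necessary for the polynomial bound nor a substitute for the conjugation closure. Finally, the statement concerns arbitrary finitely generated nilpotent groups; you should at least record the reduction to the free nilpotent case (every subgroup of a finitely generated nilpotent group is finitely generated, and one works in $N_{r,c}$ mapping onto $G$), as the paper does in its first line.
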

\begin{proof}
Since every subgroup of a finitely generated free nilpotent group is finitely generated, it is enough to prove the statement in the case of a free nilpotent group.

Let $N_{r,c}$ be the free nilpotent group of rank $r$ and class $c$. Since the Malcev normal form can be computed in polynomial time by Lemma~\ref{le:compute_malcev}, we assume that a subgroup elements $h,h_1,\ldots,h_n$ are given by their Malcev normal forms.  For a tuple $h_1,\ldots, h_n$, we form the coordinate matrix $A$, that is an $n\times m$ matrix whose $i$-th row is the Malcev exponent of $h_i$:
\[
A=\left( 
\begin{array}{ccccc}
\alpha_{11} & \alpha_{12} &\ldots & \alpha_{1m}\\
\vdots &&&\vdots\\
\alpha_{n1} & \alpha_{n2} &\ldots & \alpha_{nm} 
\end{array}
\right),
\]
where $h_i=Y^{[\alpha_{i1} , \alpha_{i2},\ldots,\alpha_{im}]}$, $i=1,\ldots,n.$ 
We say that the matrix $A$ is in {\em triangular} form if
it has the following properties ($\pivot i$ denotes so-called pivot, i.e., the position of the first nonzero element in row $i$):
\begin{enumerate}[(i)]
\item All rows of $A$ are non-zero (i.e. no $h_{i}$ is trivial).\label{li:std_nontrivial}
\item $\pivot{1} < \pivot{2} < \ldots < \pivot{s}$ (where $s$ is the number of pivots).\label{li:std_echelon}
\end{enumerate}
The tuple $h_1,\ldots,h_n$ is called {\em full} if the corresponding matrix is triangular and in addition
\begin{enumerate}[(i)]\setcounter{enumi}{2}
\item \label{li:std_full}  
$H\cap \langle a_i,a_{i+1},\ldots, a_m\rangle$ is generated by $\{h_{j}\mid \pi_j\ge i\}$, 
for all $1\le i\le m$.
\end{enumerate}

In (\ref{li:std_full}), note that $\{h_{j}\mid \pi_j\ge i\}$ consists of the elements that have 0 in their first $i-1$ coordinates. 
It follows from Lemma~\ref{le:malcev}(b) that (\ref{li:std_full}) 
holds for a given $i$ if and only if the following property holds.
\begin{itemize}
\item[(iii)'] For all $1\leq k<j\leq s$ with $\pivot{k}<i$, 
$h_{j}^{h_{k}}$ and $h_{j}^{h_{k}^{-1}}$ are 
elements of $\langle h_{l}\,|\, l>k\rangle$. 
\end{itemize}

To solve membership problem for a given input $(h,h_1,\ldots,h_n)$ we start by forming the coordinate matrix $A_{0}$ for the tuple $h_1,\ldots,h_n$.  We produce matrices $A_{1}, \ldots, A_{s}$, with $s$ the number of pivots in the triangular full form of $A_{0}$, such that for every $k=1,\ldots, s$ the first 
$\pi_k$ columns of $A_{k}$ form a matrix satisfying (\ref{li:std_echelon}), and the condition (\ref{li:std_full}) is satisfied for all $i<\pi_{k+1}$, so that 
$A_{s}$, upon discarding trivial rows, is the triangular full form of $A_{0}$. Here we formally denote $\pivot{s+1}=m+1$.

Let $A_{k-1}$, $k\ge 1$, be constructed. Below we construct the matrix $A_k$, starting by setting $A_k=A_{k-1}$.  Below we let $h_1,h_2,\ldots$ denote the group elements represented by the corresponding rows of the matrix $A_k$, and $\alpha_{ij}$ the entry $(i,j)$ of $A_k$.

First, we identify the column of the next pivot $\pi=\pi_k$, that is the first column with at least one nonzero entry in rows $i\ge k$. Compute a linear expression of $d=\gcd(\alpha_{k\pi},\ldots,\alpha_{n\pi})$:
\[
d=l_k\alpha_{k\pi}+\cdots +l_n\alpha_{n\pi}.
\]
Using Lemma~\ref{le:magic}, we compute the Malcev exponent of the group element $h_{n+1}=h_k^{l_k}\cdots h_n^{l_n}$,
\[
\Mal(h_{n+1})=[0,\ldots, 0,\ d=\alpha_{n+1, \pi},\ \alpha_{n+1, \pi+1},\ \ldots,\ \alpha_{n+1, m}].
\]
Then we
\begin{enumerate}
\item add the above row to the matrix $A_k$;
\item for each $i=k,\ldots, n$, replace $i$-th row by $\Mal\left(h_i\cdot h_{n+1}^{-\alpha_{i1}/d}\right)$;
\item rearrange rows $k,\ldots,n+1$ of the obtained matrix so that the only nonzero entry in the first column in those rows is in the row $k$. 
\end{enumerate}
Note that operations (1)--(3) above preserve preserve the group generated by rows of the matrix, and by Lemma~\ref{le:malcev}(a,c) can be done in polynomial time in terms of the entries of $A_{k-1}$; and that the property~(\ref{li:std_echelon}) with $i\le \pivot k$ holds for $A_1$ by Lemma~\ref{le:malcev}(b).

To obtain~(\ref{li:std_full}) for $i<\pivot{k+1}$, we identify the next pivot $\pivot{k+1}$, setting $\pivot{k+1}=m+1$ if 
$\pivot{k}$ is the last pivot. 
We now ensure condition (\ref{li:std_full}) for $i< \pi_{k+1}$. 
Observe that operations (1)--(3) above preserve $\langle h_{j}\,|\,\pivot{j}\geq i\rangle$ for all 
$i<\pivot{k}$. Hence (\ref{li:std_full}) holds in $A_{k}$ for $i<\pivot{k}$ since it holds in $A_{k-1}$ for the same range.  Now consider $i$ in the range 
$\pivot{k}\le i<\pivot{k+1}$. It suffices to provide~(\ref{li:std_full})' for all $j>k$.


To obtain~(\ref{li:std_full})', we notice that $h_{k}^{-1}h_{j}h_{k},h_{k}h_{j}h_{k}^{-1}\in \langle h_\ell \mid \ell>k\rangle$ if and only if 
$[h_{j},h_{k}^{\pm 1}]\in \langle h_{\ell}\mid \ell>k\rangle$. 
Further, note that the subgroup generated by the set 
\[
S_{j}=\{1,h_j, [h_j,h_k],\ldots, [h_j,h_k,\ldots,h_k]\}, 
\]
where $h_k$ appears $m-\pivot{k}$ times in the last commutator, is closed under commutation with $h_k$ since if $h_{k}$ appears more than $m-\pivot{k}$ times then 
the commutator is trivial. An inductive argument shows that the subgroup $\langle S_{j}\rangle$ coincides with $\langle h_j^{h_k^\ell}\mid 0\le \ell\le m-\pi_k\rangle$. Similar observations can be made for conjugation by $h_k^{-1}$. Therefore, appending rows $\coords{h_j^{h_k^\ell}}$ for all 
$1\leq |\ell| \leq m-\pivot{k}$ and all $k< j\le n+3$ delivers~(\ref{li:std_full})' for all $j>k$. Note that~(\ref{li:std_full})' remains true for $i<\pivot{k}$. 


The process terminates at a matrix $A_s$, $s\le m$. Discarding trivial rows, we obtain a matrix $A$ that satisfies~(\ref{li:std_nontrivial})--(\ref{li:std_full}), whose rows generate the same subgroup as those of $A_0$. Finally, observe that for a triangular full matrix $A$, checking whether $h\in H$ can be done straightforwardly. Indeed, let $\Mal(h)=(\alpha_1,\ldots,\alpha_m)$ and let $\alpha_\pi$ be the first nonzero coordinate of $h$. Then, by Lemma~\ref{le:malcev} and property~(\ref{li:std_full}) of $A$, $h\in H$ if and only if $h^{(1)}=hh_1^{\alpha_\pi/\alpha_{1\pi}}\in H$ (in particular, if $\alpha_\pi$ is not a multiple of $\alpha_{1\pi}$, then $h\notin H$). Proceed ``left to right'', successively eliminating entries of $h^{(i)}$. If at any step the elimination fails, $h\notin H$. If the process terminates at a trivial $h^{(s)}$, then $h\in H$, and moreover, a representation $h=h_1^{l_1}\cdots h_s^{l_s}$ has been found.
\end{proof}
\begin{remark}
Observe that the above algorithm is presentation-uniform, that is, for a fixed $r$ and $c$, given a presentation with at most $r$ generators of a class at most $c$ nilpotent group, and input of the membership problem, it in polynomial time decides membership in the group given by the presentation for the given elelements.
\end{remark}

\subsection{Surface groups and Coxeter groups}
In~\cite{Schupp:2003}, Schupp gives an analogue of Stallings graphs for free groups in the case of Coxeter and surface groups satisfying certain conditions. Remarkably, the corresponding algorithms have polynomial time complexity. To formulate the respective statements, we need to provide certain small cancellation conditions.

Recall that a {\em Coxeter group} $G$ is a group with presentation
\begin{equation}\label{eq:cox}
G=\langle A\mid R\rangle=\langle a_1,\ldots, a_n\mid a_i^2,\ (a_ia_j)^{m_{ij}}, i\neq j\rangle,
\end{equation}
where $m_{ij}=m_{ji}>1$ and we may have $m_{ij}=\infty$ (which denotes the absence of a defining relator between $a_i$ and $a_j$). The above presentation is referred to as the standard presentation of the Coxeter group $G$. For each $1\le i\le n$, let $\rho_i$ denote the number of indices $j\neq i$ s.t. $m_{ij}<\infty$. Also set $\rho_{ij}=\max\{\rho_i,\rho_j\}$.

We say that a Coxeter group $G$ given by its standard presentation satisfies the {\em Reduction Hypothesis} if  in~\eqref{eq:cox}, $n\ge 3$ and each $m_{ij}>4$, and there is a subset $C\subseteq A$ such that every defining relator $(a_ia_j)^{m_{ij}}$ contains a generator from $C$ and satisfies the following condition:
\begin{enumerate}
\item If both $a_i,a_j\in C$, then $m_{ij}>\tfrac{3}{2}\rho_{ij}$;
\item If $a_i\in C$ and $a_j\notin C$ then $m_{ij}>2\rho_i$.
\end{enumerate}

We say that a {\em surface group} $G$ given by its standard presentation satisfies the {\em Reduction Hypothesis} if the standard defining relator has length at least $8$, i.e. the genus is at least $2$ in the orientable case and is at least $4$ in the non-orientable case.

\begin{theorem}[Schupp,~\cite{Schupp:2003}]\label{th:coxeter_graph}
There is a fixed quadratic time algorithm which, when given the standard presentation of a Coxeter group $G=\langle A\mid R\rangle$ satisfying the Reduction Hypothesis, a tuple $(h_1,\ldots,h_m)$ of generators for a subgroup $H$, 
calculates the graph $\Delta(H)$ which, in particular, is the graph of a finite state automaton which accepts a Dehn reduced word $u$ if and only if $u\in H$.
\end{theorem}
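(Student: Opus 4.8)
The plan is to mimic, in the small-cancellation setting provided by the Reduction Hypothesis, the Stallings folding construction that produces the subgroup graph of a finitely generated subgroup of a free group. \emph{First}, I would build the bouquet $\Bouquet(h_1,\ldots,h_m)$: a finite $A$-labeled digraph with a basepoint $v_0$ carrying, for each generator $h_i$, a loop at $v_0$ spelling $h_i$ over $A\cup A^{-1}$, so that closed paths at $v_0$ recover exactly the words representing elements of $H$. \emph{Second}, I would perform Stallings folds, identifying any two equally labeled edges that share an initial (or terminal) vertex. Folds strictly decrease the edge count, so this phase terminates with a deterministic inverse automaton. In a free group this already yields the subgroup graph; here the folds only account for free reduction (the relators $a_i^2$ collapse inverse pairs), and the relators $(a_ia_j)^{m_{ij}}$ must still be incorporated.

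The crucial \emph{third} step, completion, is where the Reduction Hypothesis enters. The conditions $m_{ij}>4$ together with $m_{ij}>\tfrac32\rho_{ij}$ (both generators in $C$) and $m_{ij}>2\rho_i$ (one generator in $C$) force the relators to overlap only in short pieces, yielding a Dehn-type small-cancellation presentation. Geometrically: whenever a reduced path in the graph already spells strictly more than half of some relator $(a_ia_j)^{m_{ij}}$, the complementary shorter arc may be glued in to close up the relator cycle, and this introduces no element outside $H$ since the resulting closed word equals $1$ in $G$. I would therefore alternate $\Complete$ operations (attaching the missing short arc of a relator whose long arc is present) with further folds until neither applies. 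The Reduction Hypothesis guarantees the added arcs are strictly shorter than the arcs already present, which is what drives termination of the alternation.

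For correctness, I would read $\Delta(H)$ as an automaton with $v_0$ as both initial and accept state and show it accepts a Dehn-reduced word $u$ iff $u\in H$. One direction is immediate, since any accepted word labels a closed path at $v_0$ and hence lies in $H$. For the converse, given $u\in H$ Dehn-reduced, I would take a van Kampen diagram (equivalently a path in the universal cover of the bouquet) witnessing $u\in H$ and push $u$ onto $\Delta(H)$ by reading it, invoking completeness to supply any missing relator arc and Dehn-reducedness to exclude shortenings; this is Schupp's fellow-traveling argument specialized to Reduction-Hypothesis groups. The quadratic complexity then follows by bounding the data: each fold removes an edge and each completion attaches an arc of length at most $\max_{ij} m_{ij}$, a constant of the fixed presentation, so the graph stays linear in $\sum|h_i|$ and the number of fold/complete rounds is likewise linear.

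The main obstacle is the interaction in the third and fourth steps: proving that the fold/complete alternation halts at a graph that is simultaneously folded and relator-complete, and that this graph recognizes \emph{exactly} the Dehn-reduced members of $H$. Both rest on the quantitative inequalities of the Reduction Hypothesis, which bound piece lengths relative to relator lengths; these are precisely what ensures that each completion shortens the ``deficit'' against a relator without ever forcing an already accepted word to become Dehn-reducible. The parallel argument for surface groups runs the same way, with the single standard relator of length at least $8$ playing the role of the small-cancellation data.
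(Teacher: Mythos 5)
The paper does not prove this statement at all: it is quoted verbatim from Schupp's paper \cite{Schupp:2003} and used as a black box, so there is no in-paper proof to compare against. Your sketch does follow the general strategy of Schupp's actual argument (bouquet, Stallings folds, then ``completion'' by attaching the short complementary arc of a relator whenever more than half of it is read in the graph), so the architecture is right.

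The genuine gap is in termination and the quadratic bound. Your argument is that ``the added arcs are strictly shorter than the arcs already present, which is what drives termination,'' and later that ``the graph stays linear in $\sum|h_i|$ and the number of fold/complete rounds is likewise linear.'' Neither claim follows from what you have written: each completion \emph{adds} new vertices and edges, and those new edges can create new long arcs of other relators, triggering further completions, so shortness of the attached arc by itself bounds nothing --- a priori the process could cascade indefinitely or blow up the graph. Schupp's actual mechanism (it is in the title of his paper) is a \emph{perimeter} functional on labeled graphs, roughly a weighted count over vertices of the relator--letter incidences not yet realized; the inequalities of the Reduction Hypothesis, $m_{ij}>\tfrac{3}{2}\rho_{ij}$ and $m_{ij}>2\rho_i$, are calibrated exactly so that the perimeter contributed by the newly created vertices of a completion is strictly less than the perimeter consumed at the existing vertices, hence the perimeter strictly decreases at every fold and every completion. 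Since the initial perimeter is linear in $\sum|h_i|$, there are linearly many operations, each implementable in linear time, which is where the quadratic bound comes from. Without some such monovariant your construction is not known to halt, and the complexity claim is unsupported. The correctness direction (every Dehn-reduced $u\in H$ is accepted) also leans on the graph being simultaneously folded and complete at termination together with a Greendlinger/Dehn-algorithm argument, which you correctly flag as the remaining obstacle but do not supply.
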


\begin{theorem}[Schupp,~\cite{Schupp:2003}]\label{th:surface_graph}
There is a fixed quartic time algorithm which, when given the standard presentation of a surface group of genus at least $2$ in the orientable case and at least $4$ in the non-orientable case, and a tuple $(h_1,\ldots,h_m)$ of generators for a subgroup $H$, 
calculates the graph $\Delta(H)$ which, in particular, is the graph of a finite state automaton which accepts a Dehn reduced word $u$ if and only if $u\in H$.
\end{theorem}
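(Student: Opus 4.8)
The plan is to imitate Stallings' folding construction for free groups, adapted to the surface group $G=\gp{A\mid R}$ through its small cancellation structure, in close parallel to the Coxeter case of Theorem~\ref{th:coxeter_graph}. The genus hypothesis is exactly what guarantees that the single defining relator $R$ has length at least $8$ and satisfies a metric small cancellation condition (e.g. $C'(1/6)$ in the orientable case): the maximal pieces — common subwords of distinct cyclic conjugates of $R^{\pm 1}$ — are short relative to $|R|$. Consequently Dehn's algorithm solves the word problem, and a word is \emph{Dehn reduced} — freely reduced and containing no subword that is strictly more than half of a cyclic conjugate of $R$ — if and only if it is geodesic and canonical for the element it represents. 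This is the property that will make the resulting automaton deterministic with accepted language exactly the Dehn reduced words of $H$.

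First I would build the initial labeled graph $\Bouquet(h_1,\ldots,h_m)$: a wedge of $m$ loops at a basepoint, the $i$-th loop subdivided into $|h_i|$ edges spelling a freely reduced form of $h_i$ over $A\cup A^{-1}$. Every labeled edge-path from the basepoint to itself reads a word of $H$, an invariant that all subsequent moves will preserve, so that $\gp{h_1,\ldots,h_m}$ is always the subgroup read by loops at the basepoint. I would then apply two kinds of moves until neither is available: (a) \emph{folding}, identifying two edges with a common endpoint carrying the same label; and (b) \emph{relator reduction}, replacing a reduced path that reads a word $r_1$ which is strictly more than half of a cyclic conjugate $r_1 r_2$ of $R$ by a path reading $r_2^{-1}$ — legitimate since $r_1=r_2^{-1}$ in $G$ and $|r_2|<|r_1|$ — followed by folding. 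Each fold strictly decreases the number of edges, and each relator reduction strictly decreases a suitable \emph{perimeter} functional (a weighted total edge length), so the process terminates.

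For correctness of the terminal graph, one direction is immediate from the preserved invariant: every Dehn reduced word read by a loop at the basepoint lies in $H$. For the converse, given a Dehn reduced $u\in H$, I would take a van Kampen diagram over $G$ witnessing $u\cdot w^{-1}=1$ for some loop-label $w$ of the terminal graph, and use Greendlinger's lemma (available from the small cancellation hypothesis) together with the facts that the graph is folded and free of ``more than half a relator'' paths to conclude that $u$ itself must label a loop at the basepoint; any deviation would exhibit a region of the diagram forcing a fold or a relator reduction still available in the graph, contradicting termination. For the complexity I would bound the number of moves by $O(N^2)$ and the cost of locating and performing each move (scanning for foldable pairs and for over-half-relator paths) by $O(N^2)$, where $N=\sum|h_i|$, yielding the claimed $O(N^4)$ running time.

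The main obstacle I expect is controlling the interaction between the two move types so as to secure confluence and the polynomial bound at once. Folding can create new ``more than half a relator'' paths, and relator reduction can create new foldable pairs, so a naive analysis risks a super-polynomial blow-up of the graph. The delicate point — and where the Reduction Hypothesis on the genus is indispensable — is to show, via the short-piece property, that the triggering paths cannot overlap in conflicting ways, that the perimeter functional strictly decreases under the combined process, and that the accumulated work is quartic rather than merely polynomial.
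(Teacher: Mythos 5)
You should first be aware that the paper contains no proof of this statement: Theorem~\ref{th:surface_graph} is imported verbatim from Schupp~\cite{Schupp:2003} (as the attribution indicates) and is used as a black box in Theorems~\ref{th:cox_and_surface_closest} and~\ref{th:cox_and_surface_shortest}, so there is no in-paper argument to compare yours against. The comparison has to be with Schupp's own construction.

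Measured against that, your sketch has the right overall shape --- a Stallings-type folding procedure for a small cancellation presentation, driven by a perimeter functional in the McCammond--Wise sense --- but the step you defer to your final paragraph is not a technicality; it is the theorem. You run two move types (folds, which decrease the edge count, and ``relator reductions,'' which you claim decrease a ``weighted total edge length''), yet neither quantity is monotone under the other move: your relator reduction replaces a path of length $>|R|/2$ by one of length $<|R|/2$ passing through \emph{new} vertices and edges, so it can increase the edge count and create new foldable pairs, while folds can create new over-half-relator paths. Without a single quantity that is non-increasing under both moves and strictly decreasing under one of them, you get neither termination nor the quartic bound. That is exactly what Schupp's perimeter (an edge-weighted count of relator cells not yet attached along each edge) is engineered to provide, and it is exactly where the Reduction Hypothesis enters: the condition $|R|\ge 8$ guarantees that attaching a relator cell along a sufficiently long subpath strictly decreases the perimeter, the initial perimeter is linear in $N=\sum|h_i|$, and $O(N^4)$ comes from (number of perimeter-reducing moves) $\times$ (cost of locating and performing one). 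Your correctness claim for the terminal graph (``any deviation would exhibit a region forcing a fold or a relator reduction'') is likewise asserted rather than proved; it requires Greendlinger's lemma applied to a reduced diagram between $u$ and a loop label, i.e., the ``completeness'' property of $\Delta(H)$, which is the other substantive half of Schupp's argument. So: right strategy, but the two load-bearing lemmas --- a perimeter that is monotone under both moves with a quartic work bound, and completeness implying the accepted language is exactly the set of Dehn reduced words of $H$ --- are missing.
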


\section{Closest Element Problem}
\label{se:closest}




\begin{theorem}\label{th:free_closest}
The closest element problem in free groups of finite rank is solvable in polynomial time.
\end{theorem}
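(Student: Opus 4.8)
The plan is to reduce the closest element problem to a collection of shortest-path computations in the Stallings graph of $H$. Throughout I identify elements of $F$ with reduced words over $A\cup A^{-1}$, and I use that for reduced words $g,h$ the word distance is
\[
d_A(g,h)=|g^{-1}h|=|g|+|h|-2|p(g,h)|,
\]
where $p(g,h)$ denotes the longest common prefix of $g$ and $h$. Since $p(g,h)$ is always a prefix of $g$, the closest element is governed entirely by how far down $g$ an element of $H$ can agree and how short it can be afterwards.

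First I would establish the key reduction. Writing $g=g_1\cdots g_n$ and $p_k=g_1\cdots g_k$ for its prefixes, let $L_k$ be the minimal length of an element of $H$ that begins (as a reduced word) with $p_k$, with $L_k=\infty$ if no such element exists. Then I claim
\[
d_A(g,H)=\min_{0\le k\le n}\bigl(|g|+L_k-2k\bigr).
\]
The lower bound holds because any $h\in H$ with $|p(g,h)|=k$ begins with $p_k$, whence $|h|\ge L_k$ and $d_A(g,h)=|g|+|h|-2k\ge |g|+L_k-2k$. The upper bound holds because an element $h$ realizing $L_k$ has $|p(g,h)|\ge k$, so $d_A(g,h)\le |g|+L_k-2k$. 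Since any $h$ attaining the overall minimum is a closest element, it suffices to compute the values $L_k$ together with witnessing elements.

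Next I would compute the $L_k$ from the Stallings (folded) graph $\Gamma(H)$ with basepoint $v_0$, which is built in polynomial time from the given generators of $H$. Because $\Gamma(H)$ is folded, reading $p_k$ from $v_0$ either fails (then $L_k=\infty$) or lands at a unique vertex $v_k$, and elements of $H$ beginning with $p_k$ correspond exactly to reduced closed paths at $v_0$ whose initial segment spells $p_k$. Thus $L_k=k+r_k$, where $r_k$ is the length of a shortest reduced path from $v_k$ back to $v_0$ whose first edge is not labelled $g_k^{-1}$ (the empty return being allowed when $v_k=v_0$). Since a shortest path never backtracks, the only reducedness constraint that must be imposed by hand is this first-edge condition at $v_k$; I would enforce it by working in the standard augmented graph on directed edges of $\Gamma(H)$ (forbidding the transition $e\mapsto e^{-1}$) and precomputing, by a single breadth-first search, the distance to $v_0$ from every state. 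Each $r_k$ is then read off in constant time, the minimum over $k$ is taken, and a witness $h$ is recovered by standard shortest-path backtracking.

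For the complexity, folding costs polynomial time in $\sum_i|h_i|$, tracing $g$ visits the vertices $v_0,\ldots,v_n$ in $O(n)$ steps, and the augmented graph has size linear in that of $\Gamma(H)$, so all required distances come from one search. The total running time is polynomial in $|g|+\sum_i|h_i|$. I expect the only genuinely delicate point to be the bookkeeping of reducedness at the seam between the traced prefix $p_k$ and the return path, that is, correctly forbidding the single backtracking edge while permitting all others; a naive shortest-path computation would otherwise silently shorten the common prefix and corrupt the value of $L_k$. Everything else is routine.
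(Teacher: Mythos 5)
Your proposal is correct, and it takes a genuinely different route from the paper's --- in fact a more careful one. The paper also folds the segment spelling $g$ onto the Stallings graph of $H$ at the basepoint, but it then marks only the vertices of that segment that become identified with $1_\Delta$, i.e.\ it locates the longest \emph{prefix} of $g$ lying in $H$ and returns that prefix as the answer. Your identity $d_A(g,H)=\min_k\bigl(|g|+L_k-2k\bigr)$ makes explicit what that shorter argument elides: the closest element need not itself be a prefix of $g$, it need only \emph{begin} with one and then return to the basepoint cheaply. Concretely, for $H=\langle aba^{-1}\rangle$ and $g=ab$ no nontrivial prefix of $g$ lies in $H$, so the prefix-only reading outputs the identity at distance $2$, whereas $aba^{-1}$ is at distance $1$; your computation finds it via $L_2=2+r_2=3$ with the return path labelled $a^{-1}$. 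So the extra layer you add --- for each prefix vertex $v_k$, the shortest reduced return to $v_0$ subject to the first-edge constraint, computed by one BFS on the directed-edge (line) graph with transitions $e\mapsto e^{-1}$ forbidden --- is not mere bookkeeping but exactly the content needed to make the theorem's proof go through; the two-sided bound on $d_A(g,H)$, the identity $L_k=k+r_k$, and the complexity analysis are all argued correctly. One small presentational point: the remark that ``a shortest path never backtracks'' does not by itself dispose of reducedness once the first-edge constraint is imposed (deleting a backtrack at the seam could create a forbidden first edge), but the augmented-graph BFS you actually use handles this correctly, so nothing is missing.
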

\begin{proof}
Let $h_1,h_2,\ldots,h_n\in F$ be words in free generators of a free group $F$. One can construct in polynomial time the Stallings graph $\Delta$ of the subgroup $H=\langle h_1,\ldots,h_n\rangle$ of $F$. By $1_\Delta$ we denote the base vertex of the graph $\Delta$. Let $\Gamma$ be a  linear graph labeled by a freely reduced word representing $g$, with initial vertex $1_\Gamma$ and terminal vertex $t_\Gamma$. Attach $\Gamma$ to $\Delta$ by identifying $1_\Gamma$ and $1_\Delta$ and perform Stallings foldings, marking the vertices of $\Gamma$ that get identified with $1_\Delta$. Let $\Delta_g$ be the obtained graph. Let $v$ be the farthest marked vertex along $\Gamma$. We claim that the group element $h_v\in H$ read as a label of the path in $\Gamma$ from $1_\Gamma$ to $v$ is the closest to $g$ element in $H$. Indeed, let $h\in H$ be such that $g=hg'$. Then the element $g'$ is readable in $\Delta_g$ as a label of a path from $1_\Delta$ to $t_\Gamma$. By construction, $g'$ cannot be shorter than the length of the path from $v$ to $t_\Gamma$ in $\Gamma$.
\end{proof}

Note that it is important for the above argument that the free group is given by its free generators. It is similarly important that in the below theorem the Coxeter or the surface group involved in the statement is given by its standard presentation. The case of an arbitrary presentation remains open.




\begin{theorem}\label{th:cox_and_surface_closest} There is polynomial time algorithm that, given a standard presentation of a Coxeter group or a surface group $G$ satisfying the Reduction Hypothesis, and elements $g,h_1,\ldots, h_m\in G$, finds the element $h\in H=\langle h_1,\ldots,h_m\rangle$ closest to $g$.
\end{theorem}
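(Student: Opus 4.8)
The plan is to reduce the closest element problem to finding a shortest geodesic representative of an element of the right coset $Hg$, and then to solve the latter using the membership graph $\Delta(H)$ supplied by Theorems~\ref{th:coxeter_graph} and~\ref{th:surface_graph}, in direct analogy with the free group argument of Theorem~\ref{th:free_closest}. First I would record the reduction: for $h\in H$ we have $d(g,h)=|h^{-1}g|$, and as $h$ ranges over $H$ the element $u=h^{-1}g$ ranges over $Hg$, with $h=gu^{-1}$. Hence $d(g,H)=\min_{u\in Hg}|u|$, and if $u^{*}$ is a shortest element of $Hg$ then $h^{*}=g(u^{*})^{-1}$ is the sought closest element. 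Since under the Reduction Hypothesis a Dehn reduced word is a geodesic, it suffices to find a shortest Dehn reduced word representing an element of $Hg$.

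Next I would build a coset version of Schupp's graph. Using Theorem~\ref{th:coxeter_graph} (resp.\ Theorem~\ref{th:surface_graph}) one constructs $\Delta(H)$ in polynomial time, with base vertex $1_\Delta$. I would then attach to $1_\Delta$ a linear segment labeled by a geodesic word for $g$, with terminal vertex $w$, and re-run the folding and saturation steps of Schupp's construction on the resulting graph. The outcome is a graph $\Delta_g$ that serves as the automaton of the coset in the following sense: a Dehn reduced word $u$ labels a path from $1_\Delta$ to $w$ in $\Delta_g$ if and only if $u\in Hg$. One direction is immediate, since a path $1_\Delta\to w$ labeled $u$ forces $u$ and $g$ to reach the same vertex, so $ug^{-1}$ is a loop at $1_\Delta$ and hence lies in $H$, i.e.\ $u\in Hg$; the converse is what the saturation must guarantee. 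The graph $\Delta_g$ has $O(|\Delta(H)|+|g|)$ vertices and is produced in polynomial time.

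Finally, because Dehn reduced words are geodesics, a shortest path from $1_\Delta$ to $w$ in $\Delta_g$ spells a shortest element $u^{*}$ of $Hg$, and such a path is found by breadth-first search in time polynomial in the size of $\Delta_g$. I would then output the distance $d(g,H)=|u^{*}|$ together with the closest element $h^{*}=g(u^{*})^{-1}$, which completes the algorithm.

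The main obstacle is the middle step: verifying that attaching the $g$-tail to $\Delta(H)$ and re-applying Schupp's completion indeed yields, in polynomial time, an automaton whose $1_\Delta$-to-$w$ paths capture exactly the Dehn reduced (hence geodesic) representatives of $Hg$. One must check that the small-cancellation saturation localizes near the newly added tail, so that the graph grows only polynomially and the process terminates, and, crucially, that the recognition property of Schupp's graph---originally established for loops at the basepoint representing elements of $H$---carries over to basepoint-to-$w$ paths representing coset elements. Establishing that no path can spell a strictly shorter non-geodesic word and, conversely, that the true shortest geodesic of $Hg$ is actually realized as a path in $\Delta_g$ is precisely where the Reduction Hypothesis and Schupp's analysis must be invoked with care.
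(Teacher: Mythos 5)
Your proposal follows essentially the same route as the paper: attach a representative of $g$ to Schupp's graph for $H$, fold via the procedure behind Theorems~\ref{th:coxeter_graph} and~\ref{th:surface_graph}, and read off a shortest Dehn reduced (hence geodesic) label of a path from the base vertex to the terminus of the attached segment, which is exactly a shortest element of the coset $Hg$. The only cosmetic difference is that the paper attaches the acyclic graph accepting \emph{all} Dehn reduced forms of $g$ rather than a single geodesic word, and its proof is just as terse about the saturation/recognition step you flag as the main obstacle.
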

\begin{proof}
Similarly to the free group case, we consider the graph that consists of a bouquet of loops reading $h_1,\ldots,h_m$, and the acyclic graph $\Gamma$ that accepts all Dehn reduced forms of $g$. We apply procedure described in~\cite{Schupp:2003} in the proof of Theorems~\ref{th:coxeter_graph} and~\ref{th:surface_graph} to obtain the folded version of this graph,~$\Delta$. Now we find the shortest reduced word labeling a path from a vertex of $\Gamma$ identified with $1_\Delta$ to the terminus of image of~$\Gamma$ in~$\Delta$. Since geodesic words are Dehn reduced, this word will represent the closest to $g$ element of $H$.
\end{proof}

\begin{corollary}
Let $G$ be a Coxeter group or a surface group given by its standard presentation satisfying the Reduction Hypothesis. Then the Closest Element Problem in  $G$ is in $\P$.
\end{corollary}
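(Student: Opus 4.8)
The plan is to deduce this corollary directly from Theorem~\ref{th:cox_and_surface_closest}. That theorem already provides a polynomial time algorithm which, on input a standard presentation of $G$ satisfying the Reduction Hypothesis together with $g$ and generators $h_1,\ldots,h_m$ of $H$, returns the element $h^\ast\in H$ closest to $g$. The only gap to close is that $\P$ is formally a class of decision problems, whereas the element-subgroup distance problem as stated asks one to produce an element of $H$; so I would first fix an appropriate decision formulation.

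I would take the decision version to be the following: given the same data as above together with a threshold $k\in\N$ written in binary, decide whether $d_A(g,H)\le k$, where $d_A(g,H)=\min_{h\in H}d_A(g,h)$. Running the algorithm of Theorem~\ref{th:cox_and_surface_closest} yields $h^\ast$ in polynomial time; inspecting its proof, the algorithm in fact computes the \emph{shortest reduced word} labeling a path from a vertex of $\Gamma$ identified with $1_\Delta$ to the terminus of the image of $\Gamma$ in $\Delta$, and the length of that word is exactly $d_A(g,h^\ast)=d_A(g,H)$. Thus the minimal distance is produced as a byproduct, and one simply compares it against $k$, answering ``yes'' if and only if $d_A(g,H)\le k$.

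The only point needing verification—and it is the crux, such as it is—is that the whole procedure runs in time polynomial in the input size under the chosen encoding; in particular one must check that $d_A(g,H)$ and the word length of $h^\ast$ are bounded polynomially in the lengths of the input words, so that the comparison with the binary threshold $k$ stays within polynomial time. These bounds follow from the polynomial bounds on the size of the folded graph $\Delta$ guaranteed by Theorems~\ref{th:coxeter_graph} and~\ref{th:surface_graph}, together with the fact that geodesic words in these groups are Dehn reduced and hence read off directly as shortest paths in $\Delta$. With these size bounds in hand, the reduction of the decision problem to Theorem~\ref{th:cox_and_surface_closest} is immediate, placing the Closest Element Problem in $\P$.
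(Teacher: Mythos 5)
Your proof is correct and takes essentially the same approach as the paper, which simply deduces the corollary immediately from Theorem~\ref{th:cox_and_surface_closest}. The extra care you take in formulating a decision version and bounding $d_A(g,H)$ is a reasonable elaboration but not a different argument; the paper treats the deduction as immediate.
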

\begin{proof}
Follows immediately from Theorem~\ref{th:cox_and_surface_closest}.
\end{proof}

\begin{theorem}\label{th:nilp_closest}
Let $G$ be a a finitely generated virtually nilpotent group. Then the Closest Element Problem in $G$ is polynomial time decidable for an arbitrary subgroup of $G$.
\end{theorem}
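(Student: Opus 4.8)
The plan is to combine three ingredients: the trivial element always lies in $H$, which bounds the search radius; the polynomial growth of $G$, which bounds the size of the relevant ball; and a polynomial-time solution to the subgroup membership problem in $G$. One then simply searches a ball of controlled radius around $g$ by a breadth-first traversal of the Cayley graph.

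First I would reduce the computational setup to the nilpotent case. Since $G$ is finitely generated and virtually nilpotent, it contains a finite-index normal nilpotent subgroup $N$ (for instance the normal core of a nilpotent finite-index subgroup, which is again nilpotent), with a fixed transversal $t_1=1,t_2,\dots,t_r$. Every element of $G$ is uniquely $t_i n$ with $n\in N$, so I can represent it by the pair consisting of the coset index $i$ and the Malcev normal form of $n$; by Lemma~\ref{le:malcev} and Lemma~\ref{le:compute_malcev} this normal form is computable in polynomial time, which yields a polynomial-time word problem in $G$. Because $r$ is a fixed constant, multiplication in $G$ reduces to a bounded number of multiplications in $N$ together with table look-ups in the finite quotient $G/N$.

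Next I would establish that subgroup membership in $G$ is decidable in polynomial time. Given $H=\langle h_1,\dots,h_m\rangle$, the image $\bar H$ of $H$ in the finite group $G/N$ is computed by closing $\{\bar h_i\}$ under the group operation, and a Schreier transversal $u_1,\dots,u_s$ (with $s\le r$, each a bounded-length word in the $h_i$) for $H\cap N$ in $H$ can be read off. The corresponding Schreier generators all lie in $H\cap N\le N$ and have polynomially bounded length, so, using Lemma~\ref{le:magic}, membership of a given $x\in G$ in $H$ reduces to checking $\bar x\in\bar H$ and then applying the nilpotent membership algorithm of Lemma~\ref{le:nilpotent_gauss} to $H\cap N$. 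All of this is polynomial because $r$ is a fixed constant. Finally I would exploit growth: since $1\in H$, we have $d(g,H)\le d(g,1)=|g|$, and $|g|$ is bounded by the input length $\ell$, so I set $R=\ell$. Being finitely generated virtually nilpotent, $G$ has polynomial growth of some fixed degree $D$, so the ball of radius $R$ about $g$ contains $O(R^D)$ elements. I would run a breadth-first search of the Cayley graph starting at $g$, expanding each vertex by the generators $A\cup A^{-1}$, recording visited vertices by their polynomial-size normal forms to avoid repetition, and testing each newly discovered vertex for membership in $H$. The first vertex lying in $H$ is met at the minimal BFS layer, hence is at minimal word distance from $g$, so it is the desired closest element; it is found after exploring at most $O(R^D)$ vertices, each at polynomial cost.

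The main obstacle is the virtually nilpotent reduction rather than the search itself: one must verify that normal forms, multiplication, and especially the computation of generators for $H\cap N$ are all carried out in polynomial time, so that membership in $G$ genuinely reduces to membership in the nilpotent group $N$, where Lemma~\ref{le:nilpotent_gauss} applies. Once a polynomial-time word problem and membership test are available, the combination of $1\in H$ (bounding the search radius by $|g|$) with polynomial growth (bounding the ball by $O(R^D)$) makes the breadth-first search, and hence the whole algorithm, run in polynomial time.
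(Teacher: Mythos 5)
Your proposal is correct and follows essentially the same route as the paper: bound the search radius by $|g|$ using $1\in H$, use polynomial growth (Wolf's theorem) to bound the ball size, enumerate the ball, and test each candidate with a polynomial-time membership algorithm. The only difference is that you explicitly carry out the reduction of membership in the virtually nilpotent group $G$ to membership in a finite-index normal nilpotent subgroup via Schreier generators for $H\cap N$, a step the paper's proof leaves implicit when it invokes Lemma~\ref{le:nilpotent_gauss} (stated there only for nilpotent groups); this is a welcome clarification rather than a deviation.
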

\begin{proof}
Indeed, let $H$ be a given subgroup of $G$ and $g\in G$. By a theorem of Wolf~\cite{Wolf} the growth of $G$ is polynomial, i.e. the ball $B_N$ of radius $N$ in Cayley graph of $G$ centered at $1$ contains polynomially many (in terms of $N$) elements of the group $G$. Moreover, by~\cite[Proposition 3.1]{MNU1} there is a $\P$-time algorithm to list all group elements contained in $B_N$. Now, note that the distance from $g$ to $H$ cannot exceed $|g|$. Set $N=|g|$ and find the closest to $g$ element of $H$ by bruteforcing elements of the form $gb$, $b\in B_N$. Since the size of $B_N$ is polynomial and the membership problem in finitely generated virtually nilpotent groups is decidable in polynomial time by Lemma~\ref{le:nilpotent_gauss}, this can be done in polynomial time.
\end{proof}

\section{Distance between rational subsets}
\label{se:rational}

In this section we consider the following problem.

\medskip\noindent
{\bf Rational subsets distance problem:} 
Given two rational subsets $R$ and $S$ of $G$, find the distance 
between $R$ and $S$, i.e., find two elements $r\in R$ and $s\in s$ 
with a shortest distance apart.

\begin{theorem}\label{th:subgroup_distance} 
Subgroup distance problem in a free group is decidable in polynomial time.
\end{theorem}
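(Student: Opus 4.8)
The plan is to reduce the subgroup distance problem to a shortest-path computation on a product of Stallings graphs, exploiting the fact that the Cayley graph of a free group $F$ is a tree. Let $H=\gp{h_1,\dots,h_n}$ and $K$ be the two input subgroups. As in the proof of Theorem~\ref{th:free_closest}, I would first build in polynomial time the Stallings graphs $\Delta_H$ and $\Delta_K$, with base vertices $b_H,b_K$; each is a folded, hence deterministic, automaton whose base vertex is both start and accept state, and a reduced word labels a closed path at $b_H$ (resp.\ $b_K$) exactly when it represents an element of $H$ (resp.\ $K$). The quantity to minimize is $d(h,k)=|h^{-1}k|$ over $h\in H$, $k\in K$ not both trivial. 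Since $F$ embeds in a tree, for reduced words $h,k$ one has $d(h,k)=|h|+|k|-2\,\mathrm{lcp}(h,k)$, where $\mathrm{lcp}(h,k)$ is the length of the longest common prefix; after the meet the two tails begin with distinct letters and hence do not cancel in $h^{-1}k$. Thus the task becomes: find the closest pair of tree vertices, one representing an element of $H$ and one an element of $K$.

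To carry this out I would form the product automaton $P$ on $V(\Delta_H)\times V(\Delta_K)$, with an edge $(p,q)\xrightarrow{a}(p',q')$ whenever $p\xrightarrow{a}p'$ in $\Delta_H$ and $q\xrightarrow{a}q'$ in $\Delta_K$, and start state $(b_H,b_K)$. A state $(p,q)$ reachable from $(b_H,b_K)$ corresponds exactly to reading a common prefix $c$ that reaches $p$ in $\Delta_H$ and $q$ in $\Delta_K$, i.e.\ a candidate meet of a pair $(h,k)$. Separately, for every vertex $p$ of $\Delta_H$ I would precompute by breadth-first search $\delta_H(p)$, the length of a shortest path from $p$ back to $b_H$ (a shortest admissible tail), and likewise $\delta_K(q)$; shortest paths never backtrack, so these tails are automatically reduced. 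The distance is then $\min_{(p,q)}\bigl(\delta_H(p)+\delta_K(q)\bigr)$ over reachable product states. I would prove the two matching inequalities: for a reachable $(p,q)$ via common word $c$ with tails $t_H,t_K$, the elements $c\,t_H\in H$ and $c\,t_K\in K$ satisfy $(c\,t_H)^{-1}(c\,t_K)=t_H^{-1}t_K$, so their distance is at most $|t_H|+|t_K|$, giving the lower bound; conversely, the product state reached by the actual meet $c^{*}$ of an optimal pair has $\delta_H+\delta_K$ no larger than the optimal distance, giving the upper bound.

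It remains to handle the ``not both trivial'' clause, which is where subgroups behave differently from arbitrary rational sets. The pair $(1,1)$ is the unique pair with empty meet and empty tails, namely the state $(b_H,b_K)$ reached by the empty word; I would forbid the all-empty choice there and replace its contribution by the minimum of the shortest nontrivial elements of $H$ and of $K$ (shortest nonempty reduced closed paths at $b_H$ and $b_K$), quantities computable in polynomial time as in Section~\ref{se:shortest}. Every other reachable product state forces at least one nonempty tail, hence a genuinely nontrivial pair. However, $(b_H,b_K)$ may also be reached by a \emph{nonempty} common word, and this occurs precisely when $H\cap K\neq\{1\}$; such a common element yields two coinciding tree vertices and distance $0$. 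This is exactly the fiber-product test for nontrivial intersection: if the component of $(b_H,b_K)$ in $P$ is not a tree, output $0$. Since $P$ has at most $|V(\Delta_H)|\cdot|V(\Delta_K)|$ vertices and every step is a breadth-first search, the whole procedure is polynomial.

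The step I expect to be the main obstacle is verifying that the product-state minimization faithfully reproduces the \emph{geodesic} distance through the tree meet-point decomposition, together with the bookkeeping that excludes the trivial pair while still returning $0$ exactly when $H\cap K\neq\{1\}$. The key point to nail down is that the common prefix $c$ cancels identically in $(c\,t_H)^{-1}(c\,t_K)=t_H^{-1}t_K$, so no spurious cancellation at the junction can make the true distance smaller than $|t_H|+|t_K|$, and that the meet of an optimal pair is always realized at a reachable product state. Once these are established, correctness reduces to the two inequalities above and a short case analysis of the base state, and polynomiality is immediate from the sizes of $\Delta_H$, $\Delta_K$ and $P$.
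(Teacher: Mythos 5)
Your route is the same as the paper's: form the product of the two Stallings graphs, decompose an optimal pair $(h,k)$ at its longest common prefix $w$, observe that $w$ reaches a vertex of the product component of the base point, and minimize the sum of the lengths of the two return tails over all reachable product states; the paper makes the identical tree/non-tree dichotomy at the base component to detect $H\cap K\neq\{1\}$, and you are in fact more explicit than the paper about the ``not both trivial'' clause. However, one step of your argument is false as stated: the claim that ``every other reachable product state forces at least one nonempty tail, hence a genuinely nontrivial pair.'' A nonempty tail need not give a nontrivial group element, because the shortest return path from $p$ to $b_H$ may simply backtrack along the common prefix $c$, in which case $c\,t_H=1$ in $F$. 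Concretely, take $F=F(a,b,c)$, $H=\gp{ab^2}$, $K=\gp{ac^2}$. The product component is a tree with a single non-base state, reached by $c=a$; the shortest returns are $t_H=t_K=a^{-1}$, so your minimization reports $\delta_H(p)+\delta_K(q)=2$, witnessed only by the forbidden pair $(1,1)$, while the true answer is $3$, attained by $(ab^2,1)$. Thus the two ``matching inequalities'' do not both hold: the direction asserting that each candidate value $\delta_H(p)+\delta_K(q)$ is realized by an admissible pair fails precisely when both shortest returns are the full backtrack of $c$.

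The gap is repairable within polynomial time: since the component is a tree, the reduced word $c$ reaching $(p,q)$ is unique, and $c\,t_H=1$ exactly when $t_H$ is the reverse of the path spelling $c$ (the graphs are folded, so that path is unique); hence at each state you must minimize $|t_H|+|t_K|$ subject to at least one tail differing from that reverse path, which you can do by also computing, for each vertex, the length of a shortest return path other than the exact backtrack (a second-shortest-path or forbidden-prefix computation). To be fair, the paper's own proof of Theorem~\ref{th:subgroup_distance} is silent on this very point --- its quantity $|h_v|+|g_v|$ is likewise minimized by the trivial pair in the example above --- so you have reproduced the paper's argument together with its one genuine soft spot; but since your write-up asserts the problematic claim explicitly, you should close it.
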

\begin{proof}
Given subgroups $H,G$ of the free group $F$, we construct in subquadratic time~\cite{T} their folded Stallings' graphs $\Gamma_1,\Gamma_2$, respectively. Then we build the product graph $\Delta=\Gamma_1\times\Gamma_2$.
Consider the connected component $\Delta_0$ of the base vertex $1_\Delta=(1_{\Gamma_1},1_{\Gamma_2})$. We may assume that $\Delta_0$ is a tree (otherwise $H\cap G$ is nontrivial).
For each vertex $v$ of $\Delta_0$, let $f_v$ be the element of the free group read along the reduced path from $1_\Delta$ to $v$. Further, let $h_v$ and $g_v$ be the elements  that realize distance from $f_v$ to $H,G$, respectively, that is, $f_vh_v\in H$, $f_vg_v\in G$ and $|h_v|$, $|g_v|$ minimal. Note that for each $v$ elements $h_v,g_v$ can be found in polynomial time by Theorem~\ref{th:free_shortest_elt}. Bruteforcing all vertices $v$ of $\Delta_0$, find $v=v_0$ with with minimal $|h_v|+|g_v|$. We claim that the shortest distance between $H$ an $G$ is attained on the pair of elements $f_{v_0}h_{v_0}, f_{v_0}g_{v_0}$.

Indeed, let $g\in G$ and $h\in H$ be the pair with minimal $|g^{-1}h|$. Then $h=wh'$, $g=wg'$, where there is no free cancellation in the product $(g')^{-1}h'$. Then the element $w$ is readable as a label of a path in $\Delta$ from $1_\Delta$ to some vertex $v$. Since $|g^{-1}h|=|g'|+|h'|$, the claim follows.
\end{proof}

Note that the key feature of Stallings graphs in the above argument is that they are deterministic automata. If a non-deterministic automata describing rational subsets of a free group are given as an input, one can of course produce deterministic ones and apply a similar argument. However, producing a deterministic automaton out of a non-deterministic one may result in an exponential blow-up in size, which invalidates polynomial complexity estimate. While we are not aware of a polynomial algorithm in general case, we can, of course, accept deterministic automata as an input and solve the corresponding problem in polynomial time.

\begin{theorem} For rational subsets given by deterministic automata, the Rational Subsets Distance Problem in a free group is decidable in polynomial time.
\end{theorem}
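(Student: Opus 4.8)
The plan is to mirror the proof of Theorem~\ref{th:subgroup_distance}, replacing the Stallings graphs by the given automata and the closest-element computation by shortest-path computations in a product automaton; the genuinely new ingredient is the passage from arbitrary accepted words to reduced words. The metric fact driving everything is the following: if $r,s$ are reduced words of $F$ with longest common prefix $p$, say $r=p\alpha$ and $s=p\beta$, then $r^{-1}s=\alpha^{-1}\beta$ is reduced, so $d(r,s)=|\alpha|+|\beta|=|r|+|s|-2|p|$. Hence
\[
\min_{r\in R,\ s\in S} d(r,s)=\min\bigl\{\,|\alpha|+|\beta| : p\alpha\in R,\ p\beta\in S\ \text{reduced},\ p\ \text{a common prefix}\,\bigr\},
\]
the minimum on the right being attained when $p$ is the longest common prefix (a shorter common prefix only increases $|\alpha|+|\beta|$). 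This converts the group-distance minimization into a search over common reduced prefixes $p$ together with shortest reduced completions into $R$ and into $S$.

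Since the input automata may accept non-reduced words, I would first replace each by an automaton accepting \emph{exactly} the reduced words representing elements of the subset. This is Benois' construction: saturate the automaton by adding an $\epsilon$-transition $q\to q'$ whenever there is a path $q\xrightarrow{\,a\,}\cdot\xrightarrow{\,a^{-1}\,}q'$, then eliminate $\epsilon$-transitions and intersect with the fixed small automaton recognizing reduced words. The saturation adds at most $O(n^2)$ edges and runs in time polynomial in the size of the given automaton. After trimming I obtain automata $\mathcal{B}_R,\mathcal{B}_S$ with $L(\mathcal{B}_R)=R$ and $L(\mathcal{B}_S)=S$ as sets of reduced words, and with the crucial property that every path from the start state reads a reduced word (being a prefix of an accepted, hence reduced, word).

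I would then form the synchronous product $\mathcal{P}=\mathcal{B}_R\times\mathcal{B}_S$, whose state-pairs reachable from the start pair are precisely those reachable by reading a common reduced word $p$. For each state $q$ of $\mathcal{B}_R$ let $\delta_R(q)$ be the length of a shortest path from $q$ to an accepting state (a single breadth-first search; finite after trimming), and define $\delta_S$ analogously. The distance between $R$ and $S$ is then $\min\{\delta_R(q)+\delta_S(q') : (q,q')\ \text{reachable in}\ \mathcal{P}\}$. For correctness: a reachable pair $(q,q')$, reached via $p$, yields reduced $r=p\alpha\in R$ and $s=p\beta\in S$ with $|\alpha|=\delta_R(q)$, $|\beta|=\delta_S(q')$, whence $d(r,s)\le\delta_R(q)+\delta_S(q')$; conversely the optimal pair, read through its longest common prefix, produces a reachable state-pair whose $\delta$-sum is at most $d(r,s)$. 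The empty-prefix case is covered by the start pair, and $R\cap S\neq\emptyset$ surfaces as a reachable pair of accepting states with $\delta$-sum $0$.

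Every step — the Benois reduction, the product, the reachability computation, and the two breadth-first searches — is polynomial in the sizes of the input automata, so the whole procedure is polynomial. The main obstacle, and the step requiring the most care, is the reduction to reduced-word automata: one must check not only that $L(\mathcal{B})$ is the reduced form of the original rational subset (Benois' theorem), but also that after trimming \emph{every} path reads a reduced word. This is exactly what guarantees that the synchronized prefixes $p$ are genuinely reduced and that the lengths $\delta_R,\delta_S$ are true geodesic lengths rather than mere word lengths in a possibly non-reduced automaton. The longest-common-prefix reduction, already implicit in Theorem~\ref{th:subgroup_distance}, then does the rest.
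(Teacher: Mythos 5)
Your proposal is correct and follows essentially the same route as the paper, which simply adapts the product-graph argument of Theorem~\ref{th:subgroup_distance}: reachable state-pairs in the product play the role of the vertices $v$ of $\Delta_0$, and your shortest completions $\delta_R(q)+\delta_S(q')$ are exactly the paper's $|h_v|+|g_v|$. Your explicit Benois reduction to automata accepting only reduced words is a welcome spelling-out of the ``minor adjustments'' the paper leaves implicit, and the longest-common-prefix identity you state is precisely the metric fact underlying the paper's $h=wh'$, $g=wg'$ decomposition.
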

\begin{proof}
The proof repeats that of the subgroup case (Theorem~\ref{th:subgroup_distance}) with minor adjustments.
\end{proof}

As an immediate corollary we, for example, obtain that the shortest distance between cosets in a free group is solvable in polynomial time.

\section{Shortest  element problem}
\label{se:shortest}


\begin{theorem}\label{th:free_shortest_elt}
The shortest  element problem in free groups of finite rank is in $\P$.
\end{theorem}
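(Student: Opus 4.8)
The plan is to adapt the Stallings graph technology already used in Theorem~\ref{th:free_closest} and Theorem~\ref{th:subgroup_distance}, exploiting the fact that in a free group $F$ the shortest nontrivial element of a subgroup $H=\langle h_1,\ldots,h_n\rangle$ is closely controlled by the combinatorics of the folded Stallings graph $\Delta$ of $H$. First I would construct $\Delta$ in (sub)quadratic time, following~\cite{T}. The key observation is that nontrivial elements of $H$ correspond exactly to reduced closed paths in $\Delta$ based at the base vertex $1_\Delta$, and the length of the freely reduced word labeling such a path equals the word length in $F$ of the element it represents. Thus the shortest nontrivial element of $H$ is realized by a shortest nontrivial reduced closed path based at $1_\Delta$.

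The main step is therefore to compute a shortest nontrivial reduced cycle through $1_\Delta$ in $\Delta$. The naive reduction to ordinary shortest cycles fails because we must forbid immediate backtracking (a path that reads $aa^{-1}$ is not reduced and collapses to the trivial element). I would handle this by passing to the \emph{line graph} (edge graph) of $\Delta$: create a vertex for each oriented edge of $\Delta$, and put a directed arc from oriented edge $e$ to oriented edge $f$ precisely when the terminus of $e$ is the origin of $f$ and $f\neq e^{-1}$ (forbidding backtracking). Reduced paths in $\Delta$ then correspond to directed walks in this auxiliary graph, each arc contributing length one. A shortest nontrivial reduced closed path at $1_\Delta$ is then found by a breadth-first search: for each oriented edge $e$ emanating from $1_\Delta$, compute the shortest directed walk in the line graph from $e$ back to an oriented edge terminating at $1_\Delta$, and take the minimum over all such $e$. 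Because $\Delta$ has size polynomial in the input and its line graph has size at most quadratic in that of $\Delta$, each breadth-first search runs in polynomial time, and we run at most as many searches as there are edges at $1_\Delta$.

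The hard part will be verifying that this genuinely produces the globally shortest nontrivial element rather than merely a short cycle, and in particular that no shorter element of $H$ arises from a word whose reduced form is shorter than any reduced closed path we detect. Here the determinism and foldedness of $\Delta$ are essential: since $\Delta$ is folded, a word $w$ lies in $H$ if and only if its freely reduced form labels a reduced closed path at $1_\Delta$, and that reduced form is the geodesic representative of $w$ in $F$. Consequently the minimum over reduced closed paths coincides with the minimum over all nontrivial $h\in H$ of $|h|$, and the line-graph search finds it. I would also note that the base-vertex restriction is necessary: a shortest reduced cycle elsewhere in $\Delta$ need not pass through $1_\Delta$ and hence need not represent an element of $H$ read from the base point, though one can alternatively observe that conjugacy lets one reduce to cycles based anywhere in a single biconnected component. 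Finally, I would record that the whole procedure is polynomial, completing the proof.
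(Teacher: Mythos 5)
Your proposal is correct and follows essentially the same route as the paper: construct the folded Stallings graph of $H$ and find a shortest nontrivial reduced loop at the base vertex, which the paper simply asserts is ``well known to be polynomial time.'' Your line-graph breadth-first search merely spells out the detail the paper leaves implicit, so there is no substantive difference in approach.
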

\begin{proof}
For a subgroup $H$ of a free group given by elements represented by words $h_1,\ldots,h_n$, we construct its Stallings graph $\Gamma$ with initial vertex $1_\Gamma$. Then finding the shortest nontrivial element in $H$ is equivalent to finding shortest loop at $1_\Gamma$, which is well known to be polynomial time.
\end{proof}


\begin{theorem}\label{th:cox_and_surface_shortest} There is polynomial time algorithm that, given a standard presentation of a Coxeter group or a surface group $G$ satisfying the Reduction Hypothesis, and elements $h_1,\ldots, h_m\in G$, finds the shortest element $h\in H=\langle h_1,\ldots,h_m\rangle$.
\end{theorem}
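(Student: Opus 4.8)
The plan is to mimic the strategy used for the Closest Element Problem in Coxeter and surface groups (Theorem~\ref{th:cox_and_surface_closest}), reducing the computation of a shortest nontrivial subgroup element to a shortest-loop search in the Schupp analogue of the Stallings graph. First I would invoke Theorems~\ref{th:coxeter_graph} and~\ref{th:surface_graph} to construct, in polynomial (in fact quadratic or quartic) time, the graph $\Delta(H)$ associated to the generating tuple $(h_1,\ldots,h_m)$. By Schupp's result, $\Delta(H)$ is a deterministic finite state automaton whose base vertex $1_\Delta$ serves as both initial and accepting state, and which accepts a Dehn reduced word $u$ precisely when $u\in H$. In particular, every nontrivial element of $H$ is represented by at least one Dehn reduced word labeling a nontrivial closed path at $1_\Delta$.

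The key observation is that a shortest nontrivial element of $H$ is witnessed by a geodesic word, and geodesic words are Dehn reduced. Hence the search for a shortest nontrivial $h\in H$ can be restricted to Dehn reduced words accepted by $\Delta(H)$, which in turn correspond to nontrivial loops at $1_\Delta$. The plan is therefore to assign each edge of $\Delta(H)$ the weight $1$ (each edge reads a single generator) and compute a shortest nontrivial closed walk through the base vertex. Since $\Delta(H)$ has size bounded by a polynomial in the input and the edge weights are uniform, this shortest-loop computation is a standard breadth-first search problem solvable in time polynomial in the size of $\Delta(H)$: one explores outward from $1_\Delta$ and detects the minimal-length return to $1_\Delta$ along a nonbacktracking path. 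The word $w$ read along such a minimal loop is then a shortest element of $H$, and its length is the sought geodesic length.

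The main subtlety, and the step I expect to require the most care, is ensuring that the shortest loop found genuinely realizes the \emph{word metric} geodesic length of a nontrivial element of $G$, rather than merely the shortest Dehn reduced representative. Concretely, one must verify that the shortest nontrivial loop at $1_\Delta$ cannot be shortened by passing to a representative that is not read along $\Delta(H)$; this is guaranteed precisely because geodesics are Dehn reduced and Dehn reduced words in $H$ are exactly the accepted words, so no shorter $H$-element can evade detection. A second point to confirm is that a shortest loop must pass through $1_\Delta$ nontrivially (i.e. it is not the empty loop and does not immediately backtrack), which follows from the foldedness of $\Delta(H)$ in the sense established by Schupp. Once these points are settled, polynomial total running time follows by combining the polynomial construction of $\Delta(H)$ with the polynomial-time shortest-loop search, completing the proof.
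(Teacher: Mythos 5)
Your proposal is correct and follows essentially the same route as the paper: build Schupp's graph $\Delta(H)$ via Theorems~\ref{th:coxeter_graph} and~\ref{th:surface_graph}, use the fact that geodesic words are Dehn reduced and that $\Delta(H)$ accepts exactly the Dehn reduced words in $H$, and then find the shortest nontrivial loop with reduced label at the base vertex by a standard polynomial-time search. The paper's own proof is just a one-line version of this argument.
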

\begin{proof}
Similarly to the free group case, we consider the graph $\Delta(H)$ provided by Theorem~\ref{th:coxeter_graph}, and find the shortest loop with a reduced label at $1_\Gamma$.
\end{proof}

\begin{corollary}
Let $G$ be a Coxeter group or a surface group given by its standard presentation satisfying the Reduction Hypothesis. Then the Shortest Element Problem in  $G$ is in $\P$.
\end{corollary}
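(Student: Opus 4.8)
The plan is to observe that this corollary is an immediate restatement of Theorem~\ref{th:cox_and_surface_shortest}, so essentially no new work is required. Recall that the Shortest Element Problem asks, for a finitely generated subgroup $H=\langle h_1,\ldots,h_m\rangle$ of $G$, to produce a shortest nontrivial element of $H$ in the word metric of $G$. This is precisely the output guaranteed by Theorem~\ref{th:cox_and_surface_shortest}, whose algorithm runs in polynomial time on inputs consisting of the standard presentation of $G$ together with the subgroup generators $h_1,\ldots,h_m$. Thus my approach is simply to match the two formulations and then invoke the theorem.

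First I would confirm that the two statements coincide verbatim. The only point that deserves a sentence is the nontriviality requirement: the Shortest Element Problem excludes the identity, whereas a literal reading of ``shortest element'' might return it. However, the algorithm of Theorem~\ref{th:cox_and_surface_shortest} locates the shortest loop carrying a reduced label at the base vertex $1_\Gamma$ of the graph $\Delta(H)$, and such a loop necessarily has positive length and hence represents a nontrivial element; so nontriviality is already built into the construction. With this identification in hand, the corollary follows by directly applying the theorem, and since its algorithm is polynomial time the problem lies in $\P$.

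I do not anticipate any genuine obstacle here. All of the substantive content — the construction of $\Delta(H)$ via Theorems~\ref{th:coxeter_graph} and~\ref{th:surface_graph}, and the polynomial-time search for a shortest reduced loop at the base vertex — has already been carried out in the proof of Theorem~\ref{th:cox_and_surface_shortest}. The corollary serves only to phrase that result in the language of the named problem and its complexity class, exactly parallel to the corollary following Theorem~\ref{th:cox_and_surface_closest} for the Closest Element Problem.
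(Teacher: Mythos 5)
Your proposal is correct and matches the paper exactly: the paper's proof is the single sentence that the corollary follows immediately from Theorem~\ref{th:cox_and_surface_shortest}. Your extra remark on nontriviality (the shortest reduced loop at the base vertex has positive length) is a reasonable clarification but not something the paper spells out.
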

\begin{proof}
Follows immediately from Theorem~\ref{th:cox_and_surface_shortest}.
\end{proof}

\begin{theorem}\label{th:nilp_shortest_elt}
The shortest  element problem in a finitely generated virtually nilpotent group $G$ is in $\P$.
\end{theorem}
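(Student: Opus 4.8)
The plan is to reduce the problem to a bounded search, exactly as in the proof of Theorem~\ref{th:nilp_closest}. The crucial observation is that the length of a shortest nontrivial element of $H$ is controlled by the input: if $w$ is such an element and $h_i$ is any nontrivial generator, then $|w|\le |h_i|$, so $|w|\le N$ where $N=\max_i|h_i|$, which is bounded by the input size. (If every $h_i$ is trivial, a condition that can be checked in polynomial time by solving the word problem in $G$, then $H=1$ and there is nothing to report.)

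First I would, as in Theorem~\ref{th:nilp_closest}, invoke Wolf's theorem~\cite{Wolf} together with~\cite[Proposition 3.1]{MNU1} to enumerate in polynomial time all elements of the ball $B_N$ of radius $N$ in the Cayley graph of $G$; since $G$ has polynomial growth, the size of $B_N$ is polynomial in $N$, hence polynomial in the input size. Then, for each nontrivial $b\in B_N$, I would test whether $b\in H$ using the polynomial-time membership algorithm for finitely generated virtually nilpotent groups (the virtually nilpotent version of Lemma~\ref{le:nilpotent_gauss} already invoked in Theorem~\ref{th:nilp_closest}). Outputting a nontrivial $b\in B_N$ with $b\in H$ of minimal length then solves the problem: by the bound above, the true shortest nontrivial element has length at most $N$, so it occurs among the listed elements of $B_N$ and is found by the search.

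Since $B_N$ has polynomially many elements, the enumeration is polynomial, and each of the polynomially many membership tests is polynomial, so the overall running time is polynomial. The only point requiring care is the a priori length bound $N$ on a shortest nontrivial element; unlike the closest element problem, where the distance to $H$ is trivially at most $|g|$, here one must rule out the degenerate case $H=1$ and argue that $\min\{|h_i|\mid h_i\neq 1\}$ is a legitimate upper bound, which it is precisely because each nontrivial generator is itself a nontrivial element of $H$. I expect this bookkeeping, rather than any substantial new idea, to be the only obstacle; the essential work is carried out by the polynomial growth estimate and the polynomial membership test already established.
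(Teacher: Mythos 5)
Your proposal is correct and follows essentially the same route as the paper: bound the length of a shortest nontrivial element by the input (the paper uses the total length of the $h_i$, you use the slightly sharper bound via a nontrivial generator), enumerate the ball of that radius in polynomial time via Wolf's theorem and \cite[Proposition 3.1]{MNU1}, and brute-force with the polynomial-time membership test of Lemma~\ref{le:nilpotent_gauss}. The only difference is cosmetic, so nothing further is needed.
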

\begin{proof}The proof is based on the same observation as that of Theorem~\ref{th:nilp_closest}. Indeed, let $H$ be a subgroup of $G$ given by words $h_1,\ldots,h_n$ in generators of $G$. Let $N$ denote the total length of these words. By a theorem of Wolf~\cite{Wolf} the growth of $G$ is polynomial, i.e. the ball $B_N$ of radius $N$ in Cayley graph of $G$ contains polynomially many elements of group $G$. Moreover, by~\cite[Proposition 3.1]{MNU1} there is a $\P$-time algorithm to list all group elements contained in $B_N$. Note that $H$ contains nontrivial elements if and only if at least one of the words $h_1,\ldots, h_n$ is nontrivial, i.e. if and only if $B_N$ contains at least one nontrivial element of $H$. Now, since the membership problem in finitely generated virtually nilpotent groups is decidable in polynomial time by Lemma~\ref{le:nilpotent_gauss}, we can find the shortest element in $G$ in polynomial time by brute search.
\end{proof}
\begin{remark}
The argument above is based on the fact that finitely generated virtually nilpotent groups have polynomial growth. By Gromov's theorem~\cite{Gromov_pgrowth:1981}  the converse is also true, i.e., polynomial growth implies virtual nilpotence, so the argument does not apply to any wider classes of groups.
\end{remark}

\section{Subgroup geodesic problem in a free group}

\label{se:geodesic-subgroup}
In this section we discuss a subgroup geodesic problem for
a finitely generated free group $F=F(X)$. It can be formulated as follows.
Given words $h_1,\ldots,h_m,h\in F$ express $h$ as an element in
$H=\gp{h_1,\ldots,h_m}$ in an optimal way, i.e., exprees $h$ as a product:
$$
h=h_{j_1}^{\varepsilon_1}\cdots h_{j_k}^{\varepsilon_k},
$$
with the least number of factors $k$. We prove that the problem can be solved
in polynomial time. 
Our main tool is an $X$-digraph with an additional labeling function.
Formally, we work with a tuple $(V,E,\mu,\nu)$, where:
\begin{itemize}
\item 
$(V,E)$ defines a directed graph.
\item 
$\mu:E\to X^\pm$.
\item 
$\nu:E\to F(h_1,\ldots,h_m)$, where $h_i$'s are formal letters that stand for 
the generators of $H$.
\end{itemize}
To achieve polynomial time complexity
we represent the values of $\nu$ using straight line programs 
(see \cite{Schleimer:2008}). For an edge $e\in E$ with $o(e)=v_1$, $t(v_2)=v_2$
$\mu(e)=x$ and $\nu(x)=u$ we often use the following notation: 
$$
e = v_1 \stackrel{x,u}{\to} v_2.
$$
The edge
$v_2 \stackrel{x^{-1},u^{-1}}{\to} v_1$ is called the \emph{inverse} of
$e=v_1 \stackrel{x,u}{\to} v_2$ and is denoted by $e^{-1}$.
A \emph{path} $p$ in a graph $\Gamma$ is a sequence of consecutive edges
$e_1,\ldots,e_k$; it has labels $\mu(p)=\mu(e_1)\ldots \mu(e_k)$
and $\nu(p)=\nu(e_1)\ldots\nu(e_k)$.
A \emph{circuit} in $\Gamma$ at $v\in V$ is a path $p$ such that $o(p)=t(p)=v$.

The initial step in the algorithm is to construct the graph 
$\Gamma_0=\Bouquet(h_1,\ldots,h_m)$.
For every word $h_i = y_1\ldots y_n \in F$ (where $y_j\in X^{\pm 1}$)
define a cyclic graph $\Gamma_{h_i} = (V,E)$, where:
$$V = \{0,\ldots,n-1\}$$
and
\begin{align*}
E =& \{i-1\stackrel{y_i,\varepsilon}{\to}i \mid 1\le i< n\} 
\cup \{n-1\stackrel{y_n,h_i}{\to}0\}\\
\cup & \{i\stackrel{y_i^{-1},\varepsilon}{\to}i-1 \mid 1\le i< n\} 
\cup \{0\stackrel{y_n^{-1},h_i^{-1}}{\to}n-1\}\\
\cup& \{i\stackrel{\varepsilon,\varepsilon}{\to}i\mid 0\le i<n\}.
\end{align*}
The graph $\Bouquet(h_1,\ldots,h_m)$ 
is obtained by merging the graphs $\Gamma_{h_1},\ldots,\Gamma_{h_m}$ 
at the vertex $0$. The vertex $0$ is designated as the \emph{root} of $\Gamma_0$.

\begin{figure}[htbp]
\centerline{ \includegraphics[height=4cm]{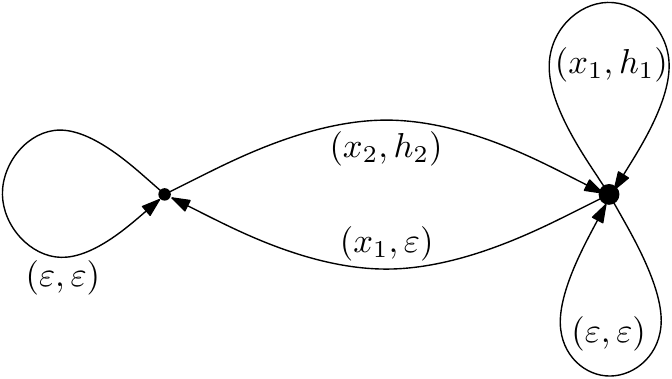} }
\caption{\label{fi:bouquet} $\Bouquet(x_1,x_1x_2)$. Inverse edges omitted.}
\end{figure}

Let $\gamma:F(h_1,\ldots,h_m) \rightarrow H$ be an epimorphism 
given by $\gamma(h_i) = h_i$.
The next lemma follows from the definition of $\Gamma_0$.

\begin{lemma}\label{le:basic_property}
Let $p$ be a circuit at $0$ in $\Gamma_0$. 
Then $\mu(p) = \gamma(\nu(p))$.
\qed
\end{lemma}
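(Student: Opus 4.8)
The plan is to prove a slightly stronger statement that applies to all paths, not just circuits, by introducing an auxiliary labeling of the \emph{vertices} of $\Gamma_0$ and then telescoping along the edges of $p$. For a vertex $v$ lying on the cyclic graph $\Gamma_{h_i}$ arising from the word $h_i=y_1\cdots y_n$, let $w(v)\in F(X)$ denote the element represented by the prefix $y_1\cdots y_j$ read along $h_i$ from the root up to $v$; in particular $w(0)=1$. The only vertex shared among the $\Gamma_{h_i}$ is the root $0$, and it receives the label $1$ from every cycle, so $w$ is well defined on all of $\Gamma_0$.

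First I would verify the single-edge identity
\[
w(o(e))\cdot\mu(e)=\gamma(\nu(e))\cdot w(t(e))
\]
for every edge $e$ of $\Gamma_0$. This is a finite check over the edge types in the definition of $\Gamma_{h_i}$: for an interior forward edge $j-1\stackrel{y_j,\varepsilon}{\to}j$ both sides equal $y_1\cdots y_j$; for the closing edge $n-1\stackrel{y_n,h_i}{\to}0$ the left side is $y_1\cdots y_n=h_i$ while the right side is $\gamma(h_i)\cdot w(0)=h_i$ (since $\gamma(h_i)\in H\le F(X)$ is exactly the element $y_1\cdots y_n$); and for a self-loop $j\stackrel{\varepsilon,\varepsilon}{\to}j$ both sides equal $w(j)$. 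The identity for an inverse edge $e^{-1}$ then follows by rearranging the identity for $e$, using $\mu(e^{-1})=\mu(e)^{-1}$ and $\nu(e^{-1})=\nu(e)^{-1}$.

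Next I would propagate this along an arbitrary path $p=e_1\cdots e_k$ by induction on $k$. Using that $\mu$, $\nu$ are multiplicative along concatenation and that $\gamma$ is a homomorphism, the single-edge identities telescope to
\[
w(o(p))\cdot\mu(p)=\gamma(\nu(p))\cdot w(t(p)),
\]
the homomorphism property of $\gamma$ being exactly what lets the intermediate factors $\gamma(\nu(e_i))$ combine into $\gamma(\nu(p))$. The hard part — really the only nontrivial point — is isolating the correct vertex invariant $w$; once it is in place the argument is a mechanical cancellation. Finally, specializing to a circuit $p$ at $0$ gives $o(p)=t(p)=0$, so $w(o(p))=w(t(p))=1$, and the displayed identity collapses to $\mu(p)=\gamma(\nu(p))$, as claimed.
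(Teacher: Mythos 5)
Your argument is correct: the vertex invariant $w(v)$ (the prefix of $h_i$ read from the root to $v$) satisfies the edge identity $w(o(e))\mu(e)=\gamma(\nu(e))w(t(e))$ in all the edge cases, and telescoping along a circuit at $0$ gives $\mu(p)=\gamma(\nu(p))$. The paper offers no proof at all --- the lemma is stated with a \qed{} and the remark that it ``follows from the definition of $\Gamma_0$'' --- so your proof is simply a careful formalization of exactly the verification the authors treat as immediate.
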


We say that a path $p$ in $\Gamma$ is {\em reduced} if $p$
does not involve a segment $e e^{-1}$. 
Reduction of a path in $\Gamma$ is a process of removing all 
segments $e e^{-1}$. It is not difficult to see
that the result of path-reduction is unique and does not depend 
on a particular choices of removals. 



We say that a pair of consecutive
edges $e_1 = v_1 \rightarrow v_2$ and $e_2 = v_2 \rightarrow v_3$
in $\Gamma$ is a {\em potential bypass} if:
\begin{itemize}
    \item
$|\mu(e_1)\mu(e_2)|\le 1$ and;
    \item
$\Gamma$ does not contain an edge $e = v_1 \rightarrow v_3$ such that
$\mu(e) = \mu(e_1)\mu(e_2)$.
\end{itemize}
Algorithm~\ref{al:completion} described below modifies the 
initial graph $\Gamma_0$ and produces a sequence of graphs:
$$
\Gamma_0 \subset \Gamma_1 \subset \ldots \subset \Gamma_s,
$$
where $\Gamma_{i+1}$ is obtained from $\Gamma_i$ by adding a single edge $e_i$ for 
some potential bypass $e_1,e_2$ at Step~\ref{step:edge}.
We point out that the output of Algorithm~\ref{al:completion}
may depend on choices it makes at Step \ref{step:short}.

\begin{algorithm}\caption{Completion: $\Gamma' = \Complete(\Gamma)$\label{al:complete}}
\label{al:completion}
\begin{algorithmic}[1]
\REQUIRE A bouquet graph $\Gamma = (V,E)$ as above.
\ENSURE A graph $\Gamma' = (V,E')$ with $E\subseteq E'$.
\WHILE{$\Gamma$ contains potential bypasses}
    \STATE \label{step:short}Find a potential bypass $e_1,e_2$ with the least $|\nu(e_1)\nu(e_2)|$.\label{al:least}
    \STATE \label{step:edge}Add a new edge $e = o(e_1) \rightarrow t(e_2)$
    \STATE \label{step:mu}Put $\mu(e) = \mu(e_1)\mu(e_2)$.
    \STATE \label{step:nu}Put $\nu(e) = \nu(e_1)\nu(e_2)$.
\ENDWHILE
\RETURN $\Gamma$.
\end{algorithmic}
\end{algorithm}

\begin{figure}[htbp]
\centerline{ \includegraphics[height=4cm]{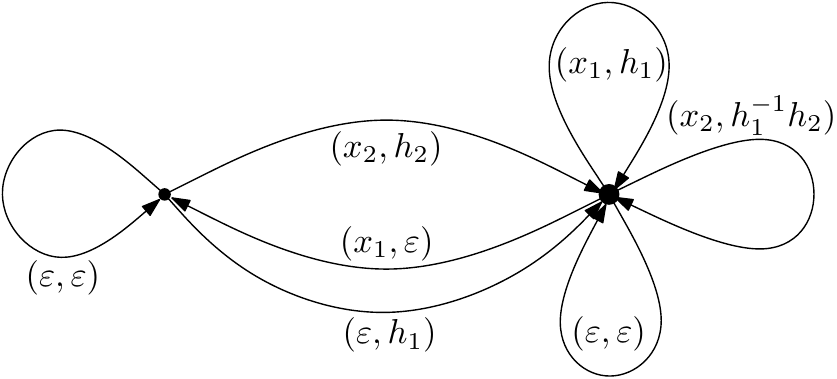} }
\caption{\label{fi:complete} Complete graph for $\{x_1,x_1x_2\}$.
Inverse edges omitted.}
\end{figure}

\begin{lemma}
Let $\Gamma' = \mathop{\mathrm{Complete}}(\Gamma)$.
Let $p$ be a circuit at the origin in $\Gamma'$. Then $\mu(p) = \gamma(\nu(p))$.
\end{lemma}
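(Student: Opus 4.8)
The statement to prove is that if $\Gamma' = \Complete(\Gamma)$ and $p$ is a circuit at the origin in $\Gamma'$, then $\mu(p) = \gamma(\nu(p))$. This is the natural extension of Lemma~\ref{le:basic_property}, which establishes the same identity for circuits in the initial bouquet $\Gamma_0$, to the larger graph obtained after running the completion algorithm. The plan is to prove the stronger edge-local invariant $\mu(e) = \gamma(\nu(e))$ for \emph{every} edge $e$ of $\Gamma'$, and then deduce the circuit statement by multiplicativity.

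The key observation is that this identity is preserved under the single operation that the algorithm performs. First I would check the base case: every edge of $\Gamma_0 = \Bouquet(h_1,\ldots,h_m)$ satisfies $\mu(e) = \gamma(\nu(e))$. This is immediate from the construction of $\Gamma_0$: the edges with $\nu = \varepsilon$ carry $\mu = y_i \in X^\pm$, and since $\gamma(\varepsilon) = 1$ this does not hold literally for a single interior edge --- so the invariant must be stated for \emph{closed paths} rather than individual edges, which is precisely why Lemma~\ref{le:basic_property} is phrased for circuits. Hence the correct approach is an induction on the number $s$ of edges added by Algorithm~\ref{al:completion}, showing that the property ``every circuit $p$ at $0$ in $\Gamma_i$ satisfies $\mu(p) = \gamma(\nu(p))$'' is inherited from $\Gamma_i$ to $\Gamma_{i+1}$.

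For the inductive step, suppose $\Gamma_{i+1}$ is obtained from $\Gamma_i$ by adding the bypass edge $e = o(e_1) \to t(e_2)$ with $\mu(e) = \mu(e_1)\mu(e_2)$ and $\nu(e) = \nu(e_1)\nu(e_2)$, where $e_1, e_2$ is a potential bypass in $\Gamma_i$. Let $p$ be any circuit at $0$ in $\Gamma_{i+1}$. I would rewrite $p$ by replacing each traversal of the new edge $e$ (and of its inverse $e^{-1}$) by the two-edge segment $e_1 e_2$ (respectively $e_2^{-1} e_1^{-1}$), obtaining a circuit $\tilde p$ at $0$ lying entirely in $\Gamma_i$. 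This substitution changes neither the $\mu$-label nor the $\nu$-label of the path, since by the definitions at Steps~\ref{step:mu} and~\ref{step:nu} we have $\mu(e) = \mu(e_1)\mu(e_2)$ and $\nu(e) = \nu(e_1)\nu(e_2)$ (and the analogous identities for inverses). Therefore $\mu(p) = \mu(\tilde p)$ and $\nu(p) = \nu(\tilde p)$, and by the induction hypothesis applied to $\tilde p$ in $\Gamma_i$ we conclude $\mu(p) = \mu(\tilde p) = \gamma(\nu(\tilde p)) = \gamma(\nu(p))$.

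The main point requiring care --- though it is not so much an obstacle as a bookkeeping verification --- is the compatibility of the substitution with inverse edges and with the concatenation/homomorphism structure: one must confirm that $\gamma$ is a homomorphism (which it is, being an epimorphism $F(h_1,\ldots,h_m) \to H$), so that $\gamma(\nu(p))$ factors as a product over the edges of $p$, and that replacing $e$ by $e_1 e_2$ respects this factorization edge-by-edge. Since every added edge is, by construction, a concatenation of two existing edges whose labels already multiply correctly, the invariant propagates verbatim, and the induction closes.
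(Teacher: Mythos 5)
Your proposal is correct and follows essentially the same route as the paper: induction over the sequence $\Gamma_0 \subset \Gamma_1 \subset \ldots \subset \Gamma_s$, with the inductive step replacing each occurrence of the newly added bypass edge by the two-edge path that defines it, which preserves both $\mu$- and $\nu$-labels, so the claim reduces to the previous graph and ultimately to Lemma~\ref{le:basic_property}. Your explicit handling of inverse edges and your observation that the invariant must be stated for circuits rather than single edges are sound refinements of the same argument.
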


\begin{proof}
We prove the statement by induction for every graph $\Gamma_i$.
By Lemma \ref{le:basic_property}, $\mu(p) = \gamma(\nu(p))$
holds for every circuit in $\Gamma_0$.
Assume it holds for every circuit in $\Gamma_{i-1}$ and consider any 
circuit $p$ in $\Gamma_i$.
Let $e_i$ be the edge in $\Gamma_i$ added to $\Gamma_{i-1}$.
If $p$ does not involve $e_i$
then we may think that $p$ belongs to $\Gamma_{i-1}$
and hence $\mu(p) = \gamma(\nu(p))$. If $p$ involves the edge $e_i$ then
replacing every occurrence of $e_i$ by a subpath $e_i'e_i''$ we obtain a circuit $p'$
satisfying $\mu(p) = \mu(p')$ and $\nu(p) = \nu(p')$ and which does not involve $e_i$.
Therefore, $\mu(p) = \mu(p') = \gamma(\nu(p')) = \gamma(\nu(p))$.
\end{proof}

We say that a path $p$ in $\Gamma_0$ {\em defines a shortcut} if
\begin{itemize}
    \item[(S1)]
$|\mu(p)| \le 1$;
    \item[(S2)]
$\nu(p)$ is reduced;
    \item[(S3)]
$|\nu(p)|$ is the least possible among the paths with the same endpoints and the label $\mu(p)$.
\end{itemize}
Clearly, if $p$ defines a shortcut, then any its subpath $q$ 
with $|\mu(q)|\le 1$ defines shortcut.
We say that an edge $e$ in $\Gamma=\Complete(\Gamma_0)$ is a {\em shortcut} 
for a path $p$ in $\Gamma_0$ if $p$ defines a shortcut 
and $o(e) = o(p)$, $t(e) = t(p)$, $\mu(e) = \mu(p)$, and $\nu(e) = \nu(p)$.

The below lemma asserts that $\Gamma$ contains all shortcuts for paths 
in $\Gamma_0$ and, moreover, each edge in $\Gamma$ is a shortcut 
for some path in $\Gamma_0$.

\begin{lemma}
\label{le:complete}
Let $h_1,\ldots,h_m \in F\setminus \{\varepsilon\}$,
$\Gamma_0 = \Bouquet(h_1,\ldots,h_m)$, and $\Gamma = \Complete(\Gamma_0)$ .
The following holds.
\begin{itemize}
    \item[(E1)]
For every edge $e$ in $\Gamma$ there exists a path $p$ in $\Gamma_0$ such that
$o(p) = o(e)$, $t(p) = t(e)$, $\mu(p) = \mu(e)$, and $\nu(p) = \nu(e)$.
    \item[(E2)]
Furthermore, every edge
$e = v_1\rightarrow v_2\in\Gamma$ is a shortcut
for some path $p = p_e$ in $\Gamma_0$.
    \item[(E3)]
If a path $p$ from $v_1$ to $v_2$ in $\Gamma_0$ defines a shortcut, 
then $\Gamma$
contains an edge $e$ such that $\mu(e) = \mu(p)$ and $|\nu(e)| = |\nu(p)|$.
\end{itemize}
\end{lemma}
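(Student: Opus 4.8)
The plan is to prove the three statements in the order (E1), (E2), (E3), treating (E1) as bookkeeping, (E2) as a Dijkstra-style optimality assertion driven by the greedy choice at Step~\ref{step:short}, and (E3) as a completeness assertion proved by path-shortening. For (E1) I would induct on the construction $\Gamma_0\subset\Gamma_1\subset\cdots\subset\Gamma_s$. Each edge of $\Gamma_0$ is a length-one path realizing itself, so the base case is immediate. When $\Gamma_{i+1}$ is obtained from $\Gamma_i$ by adding an edge $e$ for a potential bypass $(e_1,e_2)$, the induction hypothesis supplies paths $p_1,p_2$ in $\Gamma_0$ realizing $e_1,e_2$; their concatenation $p=p_1p_2$ runs from $o(e)=o(e_1)$ to $t(e)=t(e_2)$ with $\mu(p)=\mu(e_1)\mu(e_2)=\mu(e)$ and $\nu(p)=\nu(e_1)\nu(e_2)=\nu(e)$, which is exactly (E1).

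For (E2) I would show, by induction on the order of addition, that the realizing path $p$ from (E1) defines a shortcut. Property (S1) is immediate since $|\mu(p)|=|\mu(e_1)\mu(e_2)|\le 1$ by the bypass condition. The content is (S2) and (S3), and both are forced by the rule that the algorithm always selects a bypass of least $\nu$-weight. The crucial auxiliary fact is that the weights $|\nu(e_1)\nu(e_2)|$ of successively added edges are non-decreasing: once reducedness (S2) is available for earlier edges, a newly creatable bypass has weight $|\nu(e_1)|+|\nu(e_2)|$, which is at least the weight at which each of $e_1,e_2$ entered. Granting this monotonicity, suppose a path $q$ in $\Gamma_0$ with the same endpoints and $\mu(q)=\mu(e)$ had $|\nu(q)|<|\nu(e)|$; choosing $q$ of least such length makes $\nu(q)$ reduced, so $q$ is a cheaper shortcut, and by (E3) at strictly smaller cost the graph $\Gamma$ already contains an edge $e'$ with the same endpoints and $\mu$-label. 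By monotonicity $e'$ was created before $e$, so $(e_1,e_2)$ violates the second bullet of the bypass condition at the moment $e$ is added, a contradiction; this gives (S3). Reducedness (S2) is obtained in the same spirit: a cancellation in $\nu(e_1)\nu(e_2)$ would expose a strictly cheaper bypass between the same endpoints, contradicting minimality of the chosen weight.

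For (E3) I would induct on the number of edges of a shortcut-defining path $p=e'_1\cdots e'_k$ from $v_1$ to $v_2$. If $k=1$ then $p\in\Gamma_0\subseteq\Gamma$ and we are done. If $k\ge 2$, the word $\mu(e'_1)\cdots\mu(e'_k)$ reduces to length $\le 1$, so either some $\mu(e'_i)=\varepsilon$ or the word contains an adjacent inverse pair; in either case some consecutive pair satisfies $|\mu(e'_i)\mu(e'_{i+1})|\le 1$. Since the Completion terminates only when no potential bypasses remain, and $e'_i,e'_{i+1}$ are consecutive edges of $\Gamma$ with $\mu$-product of length $\le 1$, the graph $\Gamma$ must already contain an edge $e$ from $o(e'_i)$ to $t(e'_{i+1})$ with $\mu(e)=\mu(e'_i)\mu(e'_{i+1})$; otherwise that pair would still be a potential bypass. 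By (E2) the edge $e$ is a shortcut, hence $|\nu(e)|\le|\nu(e'_i)\nu(e'_{i+1})|$. Replacing the subpath $e'_ie'_{i+1}$ by a realizing path of $e$ (available by (E1)) produces a path $p'$ with the same endpoints and $\mu$-label and $|\nu(p')|\le|\nu(p)|$; minimality of $p$ forces equality, so $p'$ is again a shortcut with fewer edges, and the induction hypothesis applies.

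The main obstacle is the entanglement of (E2) and (E3): optimality (S3) is what guarantees the greedy bypass order never overlooks a cheaper shortcut, while (E3) is exactly the completeness statement that makes the greedy argument legitimate. I would resolve this by running a single stratified induction on the $\nu$-cost $W=|\nu(p)|$ (breaking ties by the number of edges), proving at each level that every edge of $\Gamma$ of weight $W$ is a shortcut and that every shortcut-path of weight $W$ is represented, so that each direction invokes the other only at strictly smaller complexity. Establishing the non-decreasing-weight (Dijkstra) property of the Completion and showing that any cancellation in $\nu(e_1)\nu(e_2)$ would violate minimality are the delicate points that make this stratification go through; once they are in place, (E1)--(E3) follow as above.
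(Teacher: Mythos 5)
Your (E1) argument coincides with the paper's. For (E2) and (E3) you take a genuinely different route: the paper proves (E2) by a local descent \emph{inside} a single step of Algorithm~\ref{al:completion} (if the added edge's realizing path is not a shortcut, take a genuinely shortest reduced path $q$, split it as $q=q_1q_2$ with each factor defining a shortcut, observe that the greedy choice at Step~\ref{step:short} forces one factor to have no shortcut in the current graph, and iterate until a single edge of $\Gamma_0$ has no shortcut --- a contradiction), and then proves (E3) ``the same way.'' You instead propose a global stratified induction on the $\nu$-cost $W$, coupling (E2) and (E3) via a Dijkstra-style monotonicity property of the weights of successively added edges. The two organizations are not equivalent: the paper never needs monotonicity of weights across steps, only the pointwise fact that at the moment an edge is added no strictly cheaper potential bypass exists.

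The gap is that the two auxiliary claims carrying all the weight in your scheme --- (i) the weights of added edges are non-decreasing, and (ii) the chosen bypass never has cancellation in $\nu(e_1)\nu(e_2)$ --- are asserted rather than proved, and the justifications you sketch for them are circular. Monotonicity fails a priori exactly when a newly created bypass involving the fresh edge has cancellation, so (i) reduces to (ii); but your argument for (ii) (``a cancellation would expose a strictly cheaper bypass'') requires producing two \emph{consecutive edges already present in the current graph} realizing the cheaper route, i.e., it requires the completeness statement (E3) at smaller cost --- which in your stratification is only available once the cost strata are known to align with the temporal order of the algorithm, i.e., once you have (i). Closing this loop is precisely the content of the paper's splitting/descent argument, so as written you have deferred the hard part rather than done it. Separately, in your (E3) induction the step ``replace $e'_ie'_{i+1}$ by a realizing path of $e$ in $\Gamma_0$'' does not decrease the number of edges (the realizing path may be long); you need to run the induction over paths in $\Gamma$, contracting two consecutive edges of $\Gamma$ to the single edge $e$, and only pass back to $\Gamma_0$ via (E1) at the end to compare $|\nu(e)|$ with the minimal value. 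These are fixable, but the proposal is not a complete proof until (i) and (ii) are actually established or the argument is reorganized to avoid them.
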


\begin{proof}
Clearly (E1) holds for every edge in $\Gamma_0$.
Assume that (E1) holds for every edge in $\Gamma_{i-1}$
and consider the edge $e\in\Gamma_i-\Gamma_{i-1}$, i.e.,
the potential bypass for edges $e',e''$ added on the $i$th step
to $\Gamma_{i-1}$. By the inductive assumption there exist paths $p',p''$
for $e',e''$ satisfying (E1). 
Then the path $p'p''$ is a required path for $e$. Thus, (E1) holds.

It is straightforward to check that for every edge $e$ in $\Gamma_0$
the path $p_e=e$ (consisting of a single edge $e$) is the required path.
Hence, (E2) holds for $\Gamma_0$.
Assume that (E2) holds for every edge in $\Gamma_{i-1}$
and consider the edge $e\in\Gamma_i-\Gamma_{i-1}$, i.e.,
the potential bypass for edges $e',e''$ added on the $i$th step
to $\Gamma_{i-1}$. By the inductive assumption there exist paths $p',p''$
for $e',e''$ satisfying (E2). If the path $p=p'p''$ does not define
a shortcut, then there is a shorter (reduced) path $q$ in $\Gamma_0$ 
with the origin $o(e)$, the terminus $t(p)$, and the label $\mu(q)=\mu(e)$
defining a shortcut.
Since $|\mu(q)|\le 1$, we can split $q$ into two nontrivial segments $q=q_1q_2$
each defining a shortcut.
Since Algorithm \ref{al:completion} on step \ref{step:short}
chooses a potential bypass with the least $\nu$-value, 
it follows that $\Gamma_{i-1}$ does not contain shortcuts for a path $q_1$ or $q_2$.
Assume without loss of generality that $\Gamma_{i-1}$ does not contain a shortcut 
for $q_1$. Proceeding as above for $q$, we split the path $q_1$ 
into two nontrivial segments $q_1=q_{11}q_{12}$
each defining a shortcut. 
Since instead of adding a shortcut for $q_1$ Algorithm \ref{al:completion} adds 
the edge $e_i$ it follows that $\Gamma_{i-1}$ does not contain a shortcut for 
$q_{11}$ or a shortcut for $q_{12}$, say $q_{11}$. And so on.
Proceeding in the same fashion, we eventually get a path $q'$ in $\Gamma_0$ of length 
$1$ (i.e., $q'$ is an edge) for which there is no shortcut.
But that is impossible because edge $\Gamma_0$ is a shortcut for itself.
The obtained contradiction proves that $e$ satisfies (E2).

One can prove (E3) the same way as (E2).
\end{proof}

\begin{algorithm}\caption{Geodesic: $u = \Geodesic(h_1,\ldots,h_k,w)$.\label{al:geodesic}}
\begin{algorithmic}[1]
\REQUIRE $\{h_1,\ldots,h_m\} \subset F_r\setminus\{\varepsilon\}$ 
and $w = x_{i_1}^{\varepsilon_1} \ldots x_{i_n}^{\varepsilon_n}\in F$.
\ENSURE A shortest factorization of $w$ in $h_i$'s.
\STATE $\Gamma_0 = \Bouquet(h_1,\ldots,h_m)$.
\STATE $\Gamma = \Complete(\Gamma_0)$.
\STATE $S = \{(0,\varepsilon)\}\subseteq V\times F(h_1,\ldots,h_m)$.
\FOR{$j=1$ to $n$}
    \STATE Put $S' = \emptyset$.
    \FORALL{$(v,u)\in S$}\label{step:for_S}
        \FORALL{$e = v\rightarrow v'$ in $\Gamma$ s.t. $\mu(e) = x_{i_j}^{\varepsilon_j}$}\label{step:forall_e}
            \STATE Put $S' = S' \cup\{(v',u\nu(e))\}$.
        \ENDFOR\label{step:endforall_e}
    \ENDFOR\label{step:endfor_S}
    \STATE Put $S = \emptyset$.
    \FORALL{$v\in V$}
        \IF{there exists $(v,u) \in S'$}
            \STATE Add a pair $(v,u)$ from $S'$ with the shortest $u$ to $S$.
        \ENDIF
    \ENDFOR
\ENDFOR
\STATE Find a pair $(0,u)\in S$.
\RETURN $u$.
\end{algorithmic}
\end{algorithm}

Now we turn to the Algorithm~\ref{al:geodesic} that computes geodesics in a given subgroup. The algorithm keeps a set $S$ of pairs $(v,u)$ where $v \in V$ and $u \in F(h_1,\ldots,h_m)$. We denote the initial set $S = \{(0,\varepsilon)\}$ by $S_0$
and the set $S$ obtained after $j$th iteration by $S_j$.

\begin{lemma}\label{le:description_S}
Let $w = x_{i_1}^{\varepsilon_1} \ldots x_{i_n}^{\varepsilon_n}\in F$.
Then for every $j\ge 1$ and every $(v,u) \in S_j$:
\begin{itemize}
    \item[(P1)]
The graph $\Gamma_0$ contains a path $p$ from $0$ to $v$ 
such that $\mu(p) = x_{i_1}^{\varepsilon_1} \cdots x_{i_j}^{\varepsilon_j}$,
$\nu(p)$ is reduced, and $\nu(p) = u$.
    \item[(P2)]
The word $u$ is a shortest possible $u=\nu(q)$ among all paths $q$ in $\Gamma_0$ 
from $0$ to $v$ with $\mu(q)=x_{i_1}^{\varepsilon_1} \cdots x_{i_j}^{\varepsilon_j}$.
    \item[(P3)]
Furthermore, if $\Gamma_0$ contains a path $p$ from $0$ to $v$ 
such that $\mu(P) = x_{i_1}^{\varepsilon_1} \ldots x_{i_j}^{\varepsilon_j}$,
then $S_j$ contains a pair $(v,u)$ for some $u \in F(h_1,\ldots,h_m)$.
\end{itemize}
\end{lemma}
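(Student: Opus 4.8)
The plan is to prove (P1)--(P3) simultaneously by induction on $j$, following how the inner loop of Algorithm~\ref{al:geodesic} passes from $S_{j-1}$ to $S_j$. Write $w_j=x_{i_1}^{\varepsilon_1}\cdots x_{i_j}^{\varepsilon_j}$ for the $j$-th prefix of $w$, and let $c_t$ denote the vertex of $\Gamma_0$ reached after $t$ edges of a given path. The base case $j=0$ is immediate: $S_0=\{(0,\varepsilon)\}$, and the trivial path realizes the empty $\mu$-label optimally at the root. For the inductive step I would fix the pair produced at stage $j$ and connect it to the data guaranteed at stage $j-1$.

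The geometric heart of the argument is a decomposition of paths in $\Gamma_0$. Given a path $q=e_1\cdots e_k$ from $0$ to $v'$ with $\mu(q)=w_j$, I would track the running product $g_t=\mu(e_1)\cdots\mu(e_t)\in F$ as a walk in the Cayley tree of $F$ that starts at $1$ and ends at $w_j$. Since $w_{j-1}$ lies on the geodesic from $1$ to $w_j$, this walk must make a \emph{last} crossing of the tree edge $(w_{j-1},w_j)$. Letting $t^{*}$ be the last index with $g_{t^*}=w_{j-1}$, one checks (using the absence of $\varepsilon$-stalling and the tree property) that $g_{t^*+1}=w_j$ and that $g_t$ has $w_j$ as a prefix for all $t>t^{*}$. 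Splitting $q=q_1q_2$ at the vertex $v=c_{t^*}$ yields $\mu(q_1)=w_{j-1}$ and $\mu(q_2)=x_{i_j}^{\varepsilon_j}$, so $q_2$ is a single-letter segment, and the minimal-$\nu$ reduced single-letter path from $v$ to $v'$ defines a shortcut in the sense of (S1)--(S3).

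With this decomposition the easy parts go as follows. For (P3), apply the induction hypothesis (P3) to $q_1$ to obtain a pair $(v,\cdot)\in S_{j-1}$, invoke Lemma~\ref{le:complete}(E3) to produce a $\Gamma$-edge $e\colon v\to v'$ with $\mu(e)=x_{i_j}^{\varepsilon_j}$, and observe that the inner loop then inserts a pair at $v'$ into $S'$, hence into $S_j$. For (P1), take $(v',u')\in S_j$; by construction $u'=u\,\nu(e)$ for some $(v,u)\in S_{j-1}$ and some $\Gamma$-edge $e\colon v\to v'$ with $\mu(e)=x_{i_j}^{\varepsilon_j}$, so realizing $u$ by a $\Gamma_0$-path via the induction hypothesis (P1) and expanding $e$ into a $\Gamma_0$-path via Lemma~\ref{le:complete}(E1)--(E2), the concatenation is a $\Gamma_0$-path from $0$ to $v'$ with $\mu$-label $w_j$ and $\nu$-label $u'$.

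The main obstacle is (P2), the optimality claim. Decomposing an optimal competitor $q=q_1q_2$ as above, the induction hypothesis (P2) gives the stored value $u_v$ at $v$ with $|u_v|\le|\nu(q_1)|$, and Lemma~\ref{le:complete}(E3) gives a shortcut edge $e$ with $|\nu(e)|\le|\nu(q_2)|$, so the algorithm stores at $v'$ a word $u'$ with $|u'|\le|u_v\,\nu(e)|$. The trouble is that word length in $F(h_1,\ldots,h_m)$ is only \emph{sub}additive under concatenation, so this yields merely $|u'|\le|\nu(q_1)|+|\nu(q_2)|$, while the quantity to beat is the possibly shorter reduced length $|\nu(q_1)\nu(q_2)|=|\nu(q)|$. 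Closing this gap is exactly where the reducedness conditions (S2) and the reduced-path clause of (P1) are indispensable: one must exclude free cancellation across the split for an optimal $q$. I would do this by exploiting the structure of $\Gamma_0=\Bouquet(h_1,\ldots,h_m)$, in which every edge carrying a nontrivial $\nu$-label is incident to the root, so $\nu$-letters are emitted only at returns to the root. Combining this with the fact that after time $t^{*}$ the walk remains on the $w_j$-side of the tree, I would argue that any cancellation between the last letter of $\nu(q_1)$ and the first letter of $\nu(q_2)$ forces a redundant root-detour which can be excised to produce a strictly shorter competitor, contradicting optimality of $q$. Hence an optimal competitor admits a cross-cancellation-free split, the subadditive bound becomes exact, and $|u'|\le|\nu(q)|$ follows; together with (P1) this shows $u$ attains the minimum, which is (P2).
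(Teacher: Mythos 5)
Your proof is correct and follows essentially the same strategy as the paper's: induction on $j$, splitting a witness path in $\Gamma_0$ into a prefix with $\mu$-label $x_{i_1}^{\varepsilon_1}\cdots x_{i_{j-1}}^{\varepsilon_{j-1}}$ and a final segment with $\mu$-label $x_{i_j}^{\varepsilon_j}$, then invoking the induction hypothesis together with Lemma~\ref{le:complete}. Two local differences are worth noting. First, where the paper splits the optimal path by ``fixing a cancellation scheme for $\mu(p)$,'' you pin down a canonical split via the last crossing of the tree edge $(w_{j-1},w_j)$ in the Cayley tree of $F$; this is a cleaner, more explicit version of the same decomposition. Second, the subadditivity obstacle you flag in (P2) is largely dissolved in the paper's formulation: the optimal competitor there is taken with $\nu(p)$ \emph{reduced as a word}, so no cancellation can occur across any split and $|\nu(p)|=|\nu(p_1\cdots p_{j-1})|+|\nu(p_j)|$ exactly; the induction hypothesis then bounds the stored value at the split vertex and the subadditive estimate closes the argument in the favorable direction. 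Your excision argument (a cancelling pair $h_i h_i^{-1}$ in the $\nu$-label forces a $\mu$-trivial root detour that path-reduces away) proves the slightly stronger fact that the minimum of reduced $\nu$-lengths over \emph{all} paths is attained on a path with reduced $\nu$-word, which is sound and arguably makes the statement of (P2) more robust, but is not strictly needed for the theorem that follows. The one point you share with the paper in glossing over is (P1): the concatenation realizing $u\,\nu(e)$ need not have a reduced $\nu$-label as written, and a word about why path-reduction restores this (as the paper's ``reducing $p_1p_2$ if necessary'') would complete that clause.
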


\begin{proof}
Initially, $S = \{(0,\varepsilon)\}$.
To construct $S_1$ the algorithm constructs the set $S'$.
It easy to see that
\[S' = \{(t(e),\nu(e)) \mid e\in\Gamma\ \mbox{s.t.}\ o(e) = 0,\  \mu(e)=x_{i_1}^{\varepsilon_1})\}
\]
and $S_1 = S'$. It follows from Lemma \ref{le:complete} that (P1), (P2), and (P3) hold for $S_1$.

Assume that $S_{j-1}$ satisfies (P1), (P2), and (P3). To construct $S_j$
the algorithm constructs a set
\[
S' = \{(t(e),u\nu(e)) \mid (v,u) \in S_{j-1},\ e\in\Gamma \ \mbox{s.t.}\ o(e) = v,\ \mu(e)=x_{i_j}^{\varepsilon_j})\}.
\]
By induction assumption for every $(v,u) \in S_{j-1}$ there is a path 
$p_1$ in $\Gamma_0$
such that $o(p_1) = 0$, $t(p_1) = v$, $\mu(p_1) = x_{i_1}^{\varepsilon_1}\ldots x_{i_{j-1}}^{\varepsilon_{j-1}}$.
By Lemma \ref{le:complete} for every $e\in\Gamma$ there is a path $p_2$ in $\Gamma_0$ satisfying
$o(e) = o(p_2)$, $t(e) = t(p_2)$, $\mu(e) = x_{i_{j}}^{\varepsilon_{j}}$. Hence
$o(p_1p_2) = 0$, $t(p_1p_2) = t(e)$, $\mu(p_1p_2) = x_{i_1}^{\varepsilon_1}\ldots x_{i_j}^{\varepsilon_j}$.
Reducing $p_1p_2$ if necessary we obtain a path in $\Gamma_0$ that delivers (P1) for $(v,u)\in S'$. Since $S_j \subseteq S'$, the property (P1) holds for every pair in $S_j$.

Now we show (P2) and (P3). For a vertex $v$ of $\Gamma_0$ 
and for a given $1\le j\le n$, let $p$ be a path in $\Gamma_0$ from $0$ to $v$ 
with  $\mu(p) = x_{i_1}^{\varepsilon_1}\ldots x_{i_j}^{\varepsilon_j}$,  
$\nu(p)$ reduced and shortest possible.
Fixing a cancellation scheme for $\mu(p)$ 
we split the path $p$ into $p_1 \ldots p_j$ so that $\mu(p_\alpha) = x_{i_\alpha}^{\varepsilon_\alpha}$ for every $\alpha = 1,\ldots,j$.
Let $v' = t(p_1 \cdots p_{j-1})$.
Since $\nu(p)$ is reduced, it follows $(v',\nu(p_1\cdots p_{j-1}))\in S_{j-1}$.
By Lemma~\ref{le:complete}, $\Gamma$ contains an edge $e$ from $v'$ to $v$ such that
$\mu(e) = x_{i_{j}}^{\varepsilon_{j}}$. Hence $S'$ contains a pair $(v,\nu(p))$.
Since $\nu(p)$ is shortest possible for the vertex $v$ and the given $j$, it follows that $|u| = |\nu(p)|$.
Therefore, the properties (P2), (P3) hold for $S_j$.
\end{proof}

\begin{theorem}
Let $u = \Geodesic(h_1,\ldots,h_m,w)$. Then $\gamma(u) = w$ and $u$ is 
a shortest with such property, i.e., $u$ is geodesic for $w$. 
\end{theorem}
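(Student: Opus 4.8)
The plan is to assemble the theorem from the two structural lemmas about the sets $S_j$ (Lemma~\ref{le:description_S}) together with the label-compatibility Lemma~\ref{le:basic_property}; since the real work is already contained in those lemmas, the proof of the theorem is essentially bookkeeping. The central dictionary is the correspondence between factorizations of $w$ over the generators and circuits at the root $0$ of $\Gamma_0=\Bouquet(h_1,\ldots,h_m)$: a factorization $w=h_{j_1}^{\delta_1}\cdots h_{j_k}^{\delta_k}$ yields the circuit $p$ obtained by concatenating the loops of $\Gamma_0$ attached to the successive factors, and this circuit satisfies $\nu(p)=h_{j_1}^{\delta_1}\cdots h_{j_k}^{\delta_k}$ while its reduced $\mu$-label equals the reduced product $\gamma(\nu(p))=w$. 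Conversely, any circuit at $0$ produces through its $\nu$-label a word in the formal generators whose $\gamma$-image is its $\mu$-label, again by Lemma~\ref{le:basic_property}.

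First I would check that the final line of Algorithm~\ref{al:geodesic} is well defined, i.e.\ that $S_n$ contains a pair with first coordinate $0$. As $w\in H$ it admits at least one factorization, which via the dictionary gives a circuit $p$ in $\Gamma_0$ at $0$ with $\mu(p)=x_{i_1}^{\varepsilon_1}\cdots x_{i_n}^{\varepsilon_n}$; property (P3) of Lemma~\ref{le:description_S} (with $j=n$, $v=0$) then yields $(0,u)\in S_n$ for some $u$, so the returned value exists. For correctness, property (P1) furnishes a circuit $p$ at $0$ in $\Gamma_0$ with $\mu(p)=x_{i_1}^{\varepsilon_1}\cdots x_{i_n}^{\varepsilon_n}=w$, with $\nu(p)$ reduced and $\nu(p)=u$; Lemma~\ref{le:basic_property} then gives $w=\mu(p)=\gamma(\nu(p))=\gamma(u)$. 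In particular $u$ is itself a valid factorization of $w$, and since $\nu(p)$ is reduced, $|u|$ is exactly its number of factors.

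Next I would establish minimality. Let $u'$ be any word in the formal generators with $\gamma(u')=w$; replacing $u'$ by its reduced form only shortens it without changing its $\gamma$-image, so we may assume $u'$ reduced, and then $|u'|$ equals the number of factors in the corresponding factorization. By the dictionary, $u'$ gives a circuit $p'$ at $0$ with $\mu(p')=w$ and $\nu(p')=u'$. Property (P2) of Lemma~\ref{le:description_S}, at $j=n$ and $v=0$, asserts that the returned $u$ minimizes the $\nu$-length over all circuits at $0$ with $\mu$-label $w$, whence $|u|\le|\nu(p')|=|u'|$. Since $u'$ ranged over all factorizations of $w$ and $u$ is one, $u$ has the least number of factors; that is, $u$ is a geodesic for $w$ relative to $h_1,\ldots,h_m$.

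The one point I expect to require care — the main obstacle — is matching the two notions of length: the combinatorial minimization supplied by (P2), ``shortest $\nu(q)$ over circuits $q$ with $\mu(q)=w$'', must be shown to coincide with the subgroup-theoretic minimization, ``least number of factors of $w$ in the $h_i$''. This rests on three observations already used above: a shortest factorization may always be taken reduced (cancelling $h_ih_i^{-1}$ only shortens it and preserves the $\gamma$-image), for a reduced word the reduced length equals the factor count, and (P1) delivers $u$ already reduced. Once these are in place the two optimization problems literally coincide, and the theorem follows.
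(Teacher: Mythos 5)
Your proposal is correct and follows essentially the same route as the paper's proof: both translate factorizations of $w$ into circuits at the root of $\Gamma_0$ and then invoke Lemma~\ref{le:description_S} (together with Lemma~\ref{le:basic_property}) to conclude that $S_n$ contains a pair $(0,u)$ realizing the minimal factor count. Your write-up is in fact more careful than the paper's terse argument, since you explicitly verify that the final step of the algorithm is well defined, that $\gamma(u)=w$, and that the $\nu$-length minimization of (P2) coincides with minimizing the number of factors.
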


\begin{proof}
Let $w = x_{i_1}^{\varepsilon_1} \cdots x_{i_n}^{\varepsilon_n}$.
Assume that $w = h_{j_1}^{\delta_1}\cdots h_{j_k}^{\delta_k}$ and $k$ 
is the least possible.
The graph $\Gamma$ contains a circuit $p$ at $0$ such that 
$\mu(p) =  h_{j_1}^{\delta_1} \cdots  h_{j_m}^{\delta_k}$, 
where every $h_{j_\alpha}^{\delta_\alpha}$, $1\le\alpha\le k$, 
is traced along the corresponding loop.
By Lemma~\ref{le:description_S} the set $S_n$ contains a pair $(0,u)$
with $|u| = k$. Therefore the output is indeed a geodesic word for $w$.
\end{proof}

\begin{theorem}
Algorithms \ref{al:complete} and \ref{al:geodesic} terminate in polynomial time.
\end{theorem}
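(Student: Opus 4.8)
The two algorithms add no new vertices, so throughout I work with the fixed vertex set $V$ of $\Gamma_0 = \Bouquet(h_1,\ldots,h_m)$, whose size is $|V| = O(N)$ where $N = \sum_i |h_i|$. The first step is to bound the size of the completed graph $\Gamma = \Complete(\Gamma_0)$. Every edge of $\Gamma_0$, and every edge added at Step~\ref{step:edge}, carries a $\mu$-label of length at most $1$, i.e. a label in $X^{\pm}\cup\{\varepsilon\}$, a set of size $2r+1$. Moreover the potential-bypass test refuses to add an edge $o(e_1)\to t(e_2)$ whenever $\Gamma$ already contains an edge with that origin, terminus, and $\mu$-label; hence $\Gamma$ contains at most one edge for each triple (origin, terminus, label), giving $|E(\Gamma)| \le (2r+1)|V|^2 = O(N^2)$ for fixed rank $r$. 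Since each pass through the while-loop of Algorithm~\ref{al:complete} adds exactly one edge, the loop executes $O(N^2)$ times, which already bounds the number of iterations polynomially.

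The subtlety is that the $\nu$-labels are words in $F(h_1,\ldots,h_m)$ whose ordinary length can double at each bypass and so may be exponential in $N$. This is exactly why $\nu$-values are stored as straight-line programs. Each new edge sets $\nu(e)=\nu(e_1)\nu(e_2)$, which costs a single new concatenation rule, so after the $O(N^2)$ additions the total SLP size is still $O(N^2)$. The only nontrivial work inside the loop is Step~\ref{step:short}, where among the $O(|E|^2)=O(N^4)$ candidate consecutive pairs one must identify a pair minimizing the reduced length $|\nu(e_1)\nu(e_2)|$. Computing and comparing reduced lengths of SLP-compressed words over the free group $F(h_1,\ldots,h_m)$ is a polynomial-time operation (the compressed word problem and compressed length computation, see~\cite{Schleimer:2008}), so each iteration is polynomial and Algorithm~\ref{al:complete} terminates in polynomial time.

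For Algorithm~\ref{al:geodesic} I would first invoke the bound just obtained: $\Gamma$ is computed in polynomial time and has $O(N^2)$ edges and $O(N)$ vertices. The outer for-loop runs $n=|w|$ times. The filtering step keeps at most one pair per vertex, so the invariant $|S|\le |V|$ holds entering each iteration. Forming $S'$ examines each pair $(v,u)\in S$ together with each of the $O(|E|)$ edges out of $v$ matching the current letter, appending one SLP concatenation per match, so $|S'|=O(|V|\,|E|)$ and building it is polynomial. Reducing $S'$ back to one shortest pair per vertex requires $O(|S'|\,|V|)$ comparisons of SLP-compressed word lengths, again polynomial by~\cite{Schleimer:2008}. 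Thus each of the $n$ iterations is polynomial and the whole algorithm runs in polynomial time.

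The main obstacle, and the crux of the argument, is precisely the control of the $\nu$-labels: without compression their lengths blow up exponentially. Everything else is bookkeeping once one observes (i) the vertex set is fixed; (ii) the $\mu$-labels live in the bounded alphabet $X^{\pm}\cup\{\varepsilon\}$, so that the bypass test caps the edge count at $O(N^2)$; and (iii) all the word manipulations needed on the compressed $\nu$-values---concatenation, free reduction, length computation and comparison---are known to be polynomial for straight-line programs. Correctness is not at issue here, having already been established in Lemmas~\ref{le:complete} and~\ref{le:description_S}; the present statement only asks for the time bounds, which follow from (i)--(iii).
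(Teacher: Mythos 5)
Your proof is correct and follows essentially the same route as the paper's: straight-line-program compression of the $\nu$-labels with the polynomial-time length computations from~\cite{Schleimer:2008}, plus counting arguments on $|V|$, $|S|$, and $|S'|$. You are in fact more explicit than the paper on one point it leaves implicit --- the bound $|E(\Gamma)|\le(2r+1)|V|^2$ forced by the second clause of the potential-bypass definition, which is what caps the number of while-loop iterations in Algorithm~\ref{al:complete}.
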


\begin{proof}
Algorithm \ref{al:complete} starts with the graph $\Gamma_0$.
To represent $\nu$-values on the edges it constructs a grammar 
consisting of terminal symbols $h_1^\pm,\ldots,h_m^\pm$ and non-terminal symbols
$\{S_e\mid e\in\Gamma'\}$. Adding an edge $e$ corresponding to a bypass $e_1,e_2$
adds a new non-terminal $S_e$ with production $S_{e_1}S_{e_2}$.
It follows from \cite{Schleimer:2008}, it requires polynomial time to compute
$|\nu(e_1)\nu(e_2)|$ for any pair of edges $e_1,e_2$.

Algorithm \ref{al:geodesic} processes $n$ letters of the word $w$.
It is easy to see that on every iteration $|S|\le |V|$ and $|S'|\le |V|^2$.
Choosing pairs $(v,u)$ with the least $|u|$ requires computing $|u|$,
which can be done in polynomial time. Hence, overall Algorithm \ref{al:geodesic}
has polynomial time complexity.
\end{proof}

\bibliography{main_bibliography}

\end{document}